\definecolor{cadmiumgreen}{rgb}{0.0, 0.42, 0.24}
\newtheorem{theorem}{Theorem}
\newtheorem{thm}[theorem]{Theorem}
\newtheorem{lemma}[theorem]{Lemma}
\newtheorem{prop}[theorem]{Proposition}
\newtheorem{corollary}[theorem]{Corollary}
\newtheorem{claim}[theorem]{Claim}
\theoremstyle{definition}
\newtheorem{dfn}[theorem]{Definition}
\newtheorem{rmk}[theorem]{Remark}
\newtheorem{eg}[theorem]{Example}
\newcommand{\floor}[1]{\left\lfloor{#1}\right\rfloor}
\newcommand{\ceil}[1]{\lceil{#1}\rceil}
\newcommand{\floorfrac}[2]{\floor{\frac{#1}{#2}}}
\newcommand{\ALR}{A}
\newcommand{\dilat}{\Delta}
\newcommand{\RR}{\mathbb{R}}
\newcommand{\ZZ}{\mathbb{Z}}
\newcommand{\NN}{\mathbb{N}}
\begin{document}

\title{Lower rational approximations and Farey staircases}

\author{David Harry Richman}
\address{Department of Mathematics, University of Washington, Seattle, WA 98195, USA}

\subjclass[2020]{Primary 11B57; Secondary 40A30, 11J70, 26D15, 40A25, 11B83}
% 11B57 Farey sequences; the sequences 1k,2k,…
% 40A30 Convergence and divergence of series and sequences of functions
% 11J70 Continued fractions and generalizations
% 26D15 Inequalities for sums, series and integrals
% 40A25 Approximation to limiting values (summation of series, etc.)
% 11B83 Special sequences and polynomials

\begin{abstract}
For a real number $x$, call $\frac1n \floor{nx}$ the 
$n$-th lower rational approximation of $x$.
We study the functions defined by
taking the cumulative average of the first $n$ lower rational approximations of $x$, which we call the Farey staircase functions.
This sequence of functions is monotonically increasing.
We determine limit behavior of these functions and show that they exhibit fractal structure under appropriate normalization.
\end{abstract}

\maketitle

\section{Introduction}
\label{sec:intro}

Let $\floor{x}$ denote the greatest integer no larger than $x$.
The function $x \mapsto \floor{x}$
%sending $x$ to $\floor{x}$ 
is commonly called the {\em floor function}.
Its graph looks like a staircase consisting of unit-height steps at each integer;
we could also call $x \mapsto \floor{x}$ the {\em unit staircase function}.

For a positive integer $n$, the function $x \mapsto \frac1n \floor{nx}$ is a rescaled staircase function.
The rescaled staircase still has ``unit slope'' between steps, but each step now has size $1/n$.
The sequence of functions $\{ \frac1n \floor{nx} : n = 1,2,\ldots\}$ approaches the identity function $x$ from below,
but the convergence is not monotonic: it is not generally true that
$\frac1m \floor{mx} \leq \frac1n \floor{nx}$ if $m \leq n$.
\begin{figure}[h]
	\centering
	\makebox[0pt]{
	\includegraphics[width=1.4\textwidth]{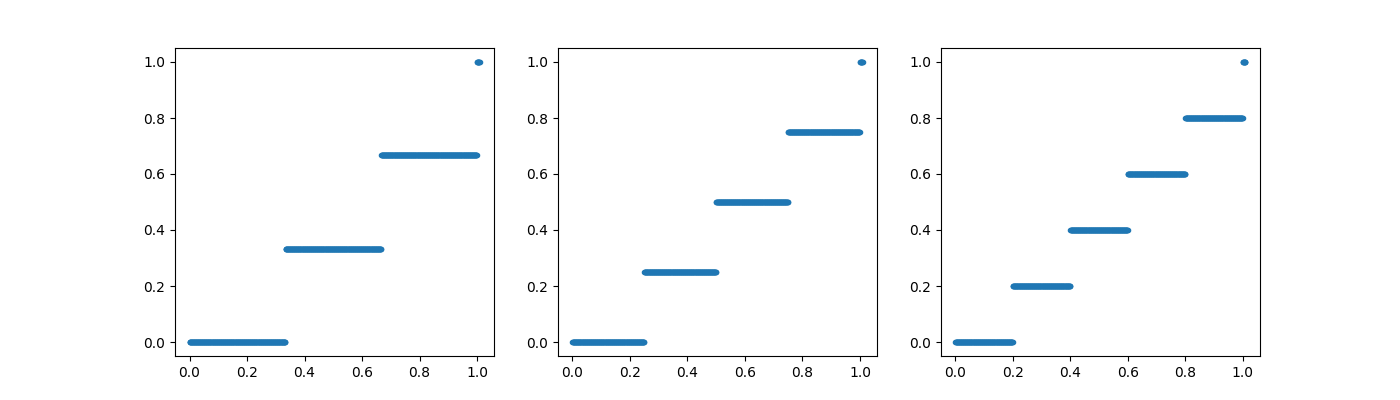}
	}
	\caption{Lower rational approximations $\frac1n \floor{nx}$, for $n = 3,4,5$.}
\end{figure}

Note that $\tfrac{1}{n} \floor{nx}$ is the largest element of $\frac{1}{n}\ZZ$ which is no larger than $x$; in symbols,
\begin{equation}
	\tfrac{1}{n} \floor{nx} = \max \{y \in \tfrac1n \ZZ : y \leq x \},
\end{equation}
which motivates us to call $\tfrac1n \floor{nx}$ the {\em $n$-th lower rational approximation} of $x$.

Consider taking the cumulative average of the first $n$ lower rational approximations, 
\begin{equation}
\label{eq:avg-lra}
	\ALR_n(x) := \frac1n \Big(\floor{x} + \tfrac12 \floor{2x} + \tfrac13 \floor{3x} +
	\cdots + \tfrac1n \floor{nx} \Big).
\end{equation}
%which we call the $n$-th {\em average lower rational approximation} of $x$.
Recall that the {\em Farey fractions} of order $n$ are all fractions whose denominator has size at most $n$;
\[
	\mathcal F_n = \left\{\frac{p}{q} : p \in \ZZ,\; q \in \NN,\; q \leq n \right\}.
\]
The function $\ALR_n(x)$ is a step function with jump discontinuities at the Farey fractions of order $n$.
We thus call the graph of $\ALR_n(x)$ the {\em Farey staircase} of order $n$.
%It has ``average slope'' one, in the sense that $\ALR_n(x) = x + O(1)$ for $x \gg 0$.
See Figure~\ref{fig:farey-345} for small examples, and Figure~\ref{fig:avg-graph} for a larger one.
\begin{figure}[h]
	\centering
	\makebox[0pt]{
	\includegraphics[width=1.4\textwidth]{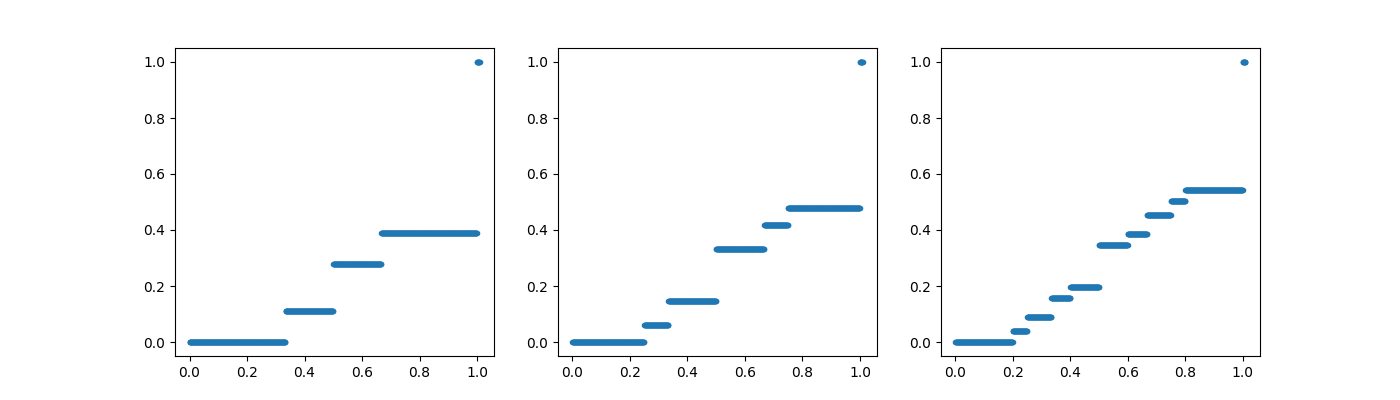}
	}
	\caption{Farey staircases $\ALR_n(x)$ for $n = 3, 4, 5$.}
	\label{fig:farey-345}
\end{figure}

A surprising property of the Farey staircase functions is that they are monotonically increasing, 
\[
	% \{\ALR_1(x),\, \ALR_2(x),\, \ALR_3(x),\, \ldots \}
	\ALR_1(x) \leq \ALR_2(x) \leq \ALR_3(x) \leq \cdots,
\]
%are in fact monotonically increasing, 
their values approaching $x$ from below.
This is proved in \cite{klamkin,larson,richman};
see also Section~\ref{sec:intro-olympiad}.
In other words, if we define the incremented staircase function
\begin{equation}
	D_n(x) = \ALR_n(x) - \ALR_{n - 1}(x),
\end{equation}
then $D_n(x) \geq 0$ for all $x$ and all $n \geq 2$.
See Figure~\ref{fig:diff-graph-345} for small examples,
and Figures~\ref{fig:diff-graph} and \ref{fig:diff-graph-100} for larger ones.

\begin{figure}[h]
	\centering
	\makebox[0pt]{
	\includegraphics[width=1.4\textwidth]{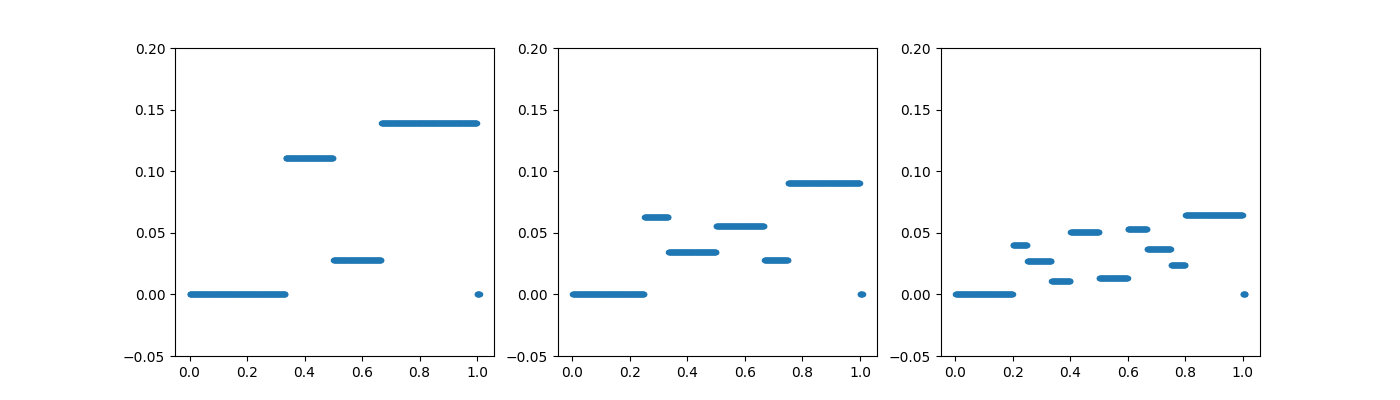}
	}
	\caption{Incremented staircases $D_n(x)$ for $n = 3, 4, 5$.}
	\label{fig:diff-graph-345}
\end{figure}

The purpose of this paper is to study the fractal behavior of the 
Farey staircases $\ALR_n(x)$ and the incremented staircases $D_n(x)$ 
that arises in the limit $n\to\infty$.
Hints of this fractal behavior are apparent in Figures~\ref{fig:avg-graph} and~\ref{fig:diff-graph}.
We will prove theorems which quantify aspects of this fractal-like behavior.

As a consequence, 
% while we do not prove that $D_n(x) \geq 0$, 
we obtain a novel perspective on why we should ``expect'' that $D_n(x)$ takes nonnegative values (under appropriate scaling) in the limit $n \to \infty$, 
which does not make claims about $D_n(x)$ for individual $n$.

\begin{figure}[h]
	\centering
	\includegraphics[scale=0.6]{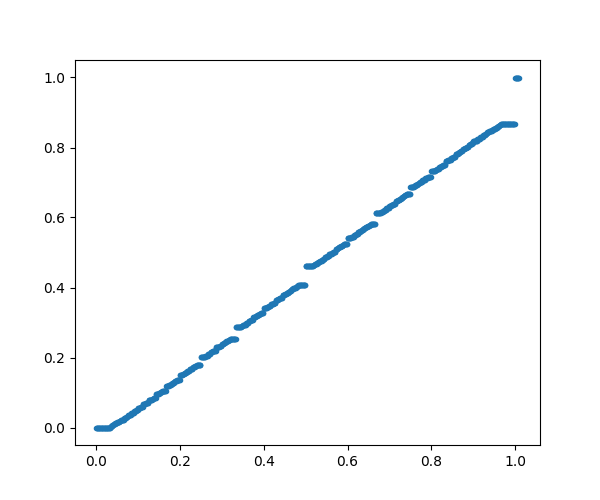}
	\caption{Farey staircase %$\tfrac1{30}\floor{30x} - A_{30}(x)$, 
	$\ALR_{30}(x)$ on the domain $[0,1]$.}
	\label{fig:avg-graph}
\end{figure}

\subsection{Results}

To capture the fractal behavior of the Farey staircase,
we first study the following rescaling of $\ALR_n(x)$.
Let
\begin{equation} 
\label{eq:alr-zoom}
	B_n(x) := n \, \ALR_n(\tfrac1n x) 
	= \floor{\tfrac{1}{n}x} + \tfrac12 \floor{\tfrac{2}{n}x} + \tfrac13 \floor{\tfrac{3}{n}x} +
	\cdots + \tfrac1n \floor{x} .
\end{equation}
We obtain $B_n$ from $\ALR_n$ by ``zooming in'' at the origin by a factor of $n$.
In the limit $n\to \infty$, the sequence of functions $B_n$ converges to a pointwise limit.
\begin{thm}
\label{thm:limit-main}
Suppose $x \geq 0$.
As $n\to \infty$, 
%$B_n(x) \to B(x)$ where
\begin{equation}
\label{eq:B-limit}
	\lim_{n \to \infty} B_n(x) = \sum_{k = 1}^{\floor{x}} \log(x / k).
	% B(x) = \floor{x} \log x - \log( \floor{x}! ).
\end{equation}
\end{thm}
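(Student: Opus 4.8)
The plan is to recognize $B_n(x)$ as a Riemann sum and evaluate its limit explicitly. Substituting $t = k/n$, one has
\[
	B_n(x) = \sum_{k=1}^n \frac1k \floor{\tfrac kn x}
	= \frac1n \sum_{k=1}^n \frac{\floor{(k/n)x}}{k/n},
\]
which is the right-endpoint Riemann sum, with mesh $1/n$, of the function $g(t) = \floor{tx}/t$ on $[0,1]$. Since $\floor{tx}=0$ for $t<1/x$, the apparent singularity at $t=0$ is harmless, and $g$ is bounded ($0 \le g \le x$) with jump discontinuities only at the finitely many points $t=k/x$ for $1 \le k \le \floor{x}$. Thus $g$ is Riemann integrable, and I expect $\lim_{n\to\infty} B_n(x) = \int_0^1 \floor{tx}/t \, dt$; evaluating this integral over the intervals where $\floor{tx}$ is constant and telescoping (summation by parts) recovers $\sum_{k=1}^{\floor{x}} \log(x/k)$.

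The route I would actually make rigorous avoids quoting convergence of Riemann sums for discontinuous integrands and instead computes the limit by hand. Writing $\floor{kx/n} = \#\{\, j \ge 1 : j \le kx/n \,\}$ and exchanging the (effectively finite) order of summation gives
\[
	B_n(x) = \sum_{j \ge 1} \; \sum_{\substack{1 \le k \le n \\ k \ge jn/x}} \frac1k
	= \sum_{j=1}^{\floor{x}} \big( H_n - H_{\ceil{jn/x}-1} \big),
\]
where $H_N = \sum_{k=1}^N 1/k$ and the outer sum truncates at $j=\floor{x}$ because the inner sum is empty once $j>x$. Applying the asymptotic $H_N = \log N + \gamma + O(1/N)$ to each block, together with $\ceil{jn/x}-1 = (j/x)n + O(1)$, I find that each summand satisfies
\[
	H_n - H_{\ceil{jn/x}-1} = \log n - \log\!\big((j/x)n\big) + O(1/n) = \log(x/j) + O(1/n),
\]
so summing the $\floor{x}$ terms yields the claimed limit with error $O(1/n)$.

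The main technical point will be controlling these error terms uniformly and handling the endpoints. I must verify $\log(\ceil{jn/x}-1) = \log(jn/x) + o(1)$, which needs $jn/x \to \infty$; this is where the hypothesis $x>0$ together with $j \ge 1$ enters, and the $O(1)$ rounding from $\ceil{\cdot}$ is absorbed since it is additive inside the logarithm of a quantity growing linearly in $n$. The delicate cases are the top index $j=\floor{x}$, whose inner sum runs over a range of length proportional to $n$ but possibly small relative to $n$ when $x$ is just above an integer, and the case when $x$ is an integer, where $j=x=\floor{x}$ contributes only the single term $k=n$, giving $H_n - H_{n-1} = 1/n \to 0$, consistent with the vanishing $\log(x/x)=0$ term in the stated sum. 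Because the outer sum has only $\floor{x}$ terms, no uniformity in $j$ beyond this fixed finite range is required, so once each block is estimated the conclusion follows.
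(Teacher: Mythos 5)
Your proof is correct and takes essentially the same route as the paper's: both rewrite $B_n(x)$ by exchanging the order of summation (equivalently, grouping terms by the value of $\floor{kx/n}$) to get the finite sum of tail harmonic blocks $\sum_{k=\ceil{jn/x}}^{n} \tfrac1k$ for $j = 1, \ldots, \floor{x}$, and then show each block tends to $\log(x/j)$. The only cosmetic difference is that you evaluate each block via the asymptotic $H_N = \log N + \gamma + O(1/N)$, whereas the paper uses a direct Riemann-sum comparison (its Lemma~\ref{lem:sum-integral}).
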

For a graph of the limit function, see the left side of Figure~\ref{fig:B-limit}.

\begin{rmk}
\label{rmk:main}
\hfill
\begin{enumerate}[(i)]
\item 
Let $B(x) \coloneqq \lim_{n \to \infty} B_n(x)$ denote
the limit function in \eqref{eq:B-limit}. 
Then $B(x)$ can be characterized as
\begin{equation}
	% \floor{x} \log x - \log( \floor{x}! ) 
	B(x)
	= \sum_{k = 1}^{\floor{x}} \left( \int_k^x \frac{1}{t} \,dt\right)
	= \int_0^x \frac{\floor{t}}{t} \,dt .
\end{equation}
Thus $B(x)$ is continuous on the domain $x \geq 0$, and we have the bound
\begin{equation}
\label{eq:B-vs-floor}
	B(x) 
	\;\leq\; \int_0^x \mathds{1}(t \geq 1) \,dt
	\;=\; x - 1 
	\;\leq\; \floor{x} .
\end{equation}
Note that, in particular, $B(x) = 0$ when $0 \leq x \leq 1$.

\item
If we let % $\log^+(x)$ denote the function
$\log^+(x) \coloneqq \max \{0, \log x\}$, then
\[
	B(x) 
	= \sum_{k = 1}^{\floor{x}} \log(x / k)
	= \sum_{k = 1}^\infty \log^+(x / k).
\]
Since $\log^+(x)$ is continuous on the domain $x \geq 0$, this also shows that $B(x)$ is continuous.

\item 
The limit function $B(x)$ % \coloneqq \lim_{n \to \infty} B_n(x)$ 
can also be expressed as $\floor{x}  \log x - \log( \floor{x}! )$.
In other words,
\begin{equation*}
	\lim_{n\to\infty} B_n(x) = \begin{cases}
	0 &\text{if } 0 \leq x < 1, \\
	\log x &\text{if }1 \leq x < 2, \\
	2\log x - \log 2 &\text{if } 2 \leq x < 3 , \\
	\qquad \vdots \\
	k \log x - \log k! &\text{if }k \leq x < k+1 .
	\end{cases}
\end{equation*}
By using Stirling's approximation for $\log k!$, 
% $\log n! = n \log n - n + \frac12 \log n + O(1)$ as $n\to\infty$,
the bound~\eqref{eq:B-vs-floor} can be refined to the asymptotic 
% for large $x$ we have
\begin{equation*}
B(x) =  x - \frac12\log x + O(1) 
\qquad\text{as }x \to \infty.
\end{equation*}
\end{enumerate}
\end{rmk}

Theorem~\ref{thm:limit-main} captures the behavior of the Farey staircase $\ALR_n(x)$ in a small neighborhood of the origin.
The following theorem more generally describes the limiting behavior of $\ALR_n(x)$
in a small neighborhood above any rational point $x = \frac{p}{q}$.
\begin{theorem}
\label{thm:limit-extension}
%Recall that $B_n(x) = \sum_{k=1}^n \tfrac1k \floor{\tfrac{k}{n}x}$.
Suppose $x \geq 0$.
For a positive reduced fraction $\frac{p}{q}$ (i.e. $p$ and $q$ are positive integers with $\gcd(p,q)=1$),
we have
\begin{equation*}
	\lim_{n \to \infty} \left(B_n(x + \tfrac{p}{q}n) - B_n(\tfrac{p}{q} n) \right) 
	= \frac{1}{q} \sum_{k = 1}^{\floor{qx}} \log(q x / k).
\end{equation*}
\end{theorem}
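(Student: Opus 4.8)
The plan is to reduce to a harmonic-sum computation that parallels the proof of Theorem~\ref{thm:limit-main}. Writing $B_n(x) = \sum_{j=1}^n \frac1j \floor{\frac{j}{n}x}$, the $j$-th term of the difference $B_n(x + \frac pq n) - B_n(\frac pq n)$ is $\frac1j\big(\floor{\frac{jx}{n} + \frac{jp}{q}} - \floor{\frac{jp}{q}}\big)$. Since $\floor{a + m} = m + \floor{a}$ for any integer $m$, pulling out the integer part of $\frac{jp}{q}$ collapses this to $\frac1j \floor{\frac{jx}{n} + \{\frac{jp}{q}\}}$, where $\{t\} = t - \floor{t}$ denotes the fractional part. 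Thus the difference equals $\sum_{j=1}^n \frac1j \floor{\frac{jx}{n} + \{\frac{jp}{q}\}}$, and because $\{\frac{jp}{q}\}$ depends only on $j \bmod q$ I would group the sum into the $q$ residue classes, writing $\beta_r := \{\frac{rp}{q}\}$ for the common value on the class $j \equiv r \pmod q$.

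On each residue class I would apply the layer-cake identity $\floor{t} = \sum_{k \ge 1}\mathds{1}(t \ge k)$ to write $\floor{\frac{jx}{n} + \beta_r} = \sum_{k \ge 1}\mathds{1}\big(j \ge \tfrac{(k - \beta_r)n}{x}\big)$, then interchange the (finite) sums over $r$, $k$ and the sum over $j$. This leaves, for each admissible pair $(r,k)$, the partial harmonic sum of $\frac1j$ over $j$ in the arithmetic progression $j \equiv r \pmod q$ with $\frac{(k-\beta_r)n}{x} \le j \le n$. The key analytic input is the asymptotic $\sum_{\substack{a \le j \le b,\ j \equiv r\,(q)}} \frac1j = \frac1q \log(b/a) + o(1)$ as $a,b \to \infty$ (density $\frac1q$ inside the progression), which gives $\frac1q \log\frac{x}{k - \beta_r}$ in the limit, nonzero precisely when $k - \beta_r < x$. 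Summing, I expect $\lim_n (B_n(x + \frac pq n) - B_n(\frac pq n)) = \frac1q \sum_{r=0}^{q-1} \sum_{k=1}^{\floor{x + \beta_r}} \log\frac{x}{k - \beta_r}$.

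It remains to identify this with $\frac1q \sum_{k=1}^{\floor{qx}} \log(qx/k) = \frac1q B(qx)$. Here I would use $\gcd(p,q) = 1$: as $r$ runs over $\{0,\dots,q-1\}$ the values $\beta_r$ run over $\{0, \frac1q, \dots, \frac{q-1}q\}$, so writing $\beta_r = \frac sq$ and clearing denominators via $\log\frac{x}{k - s/q} = \log\frac{qx}{qk - s}$ turns the double sum into a sum over the integers $m = qk - s$. A short check shows that each $m \ge 1$ arises from a unique pair $(k,s)$ (namely $k = \ceil{m/q}$, $s = qk - m$), and that the constraint $1 \le k \le \floor{x + s/q}$ is equivalent to $1 \le m \le \floor{qx}$; hence the multiset $\{qk - s\}$ is exactly $\{1,\dots,\floor{qx}\}$ and the reindexing yields the stated formula.

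The main obstacle I anticipate is the analytic step: making the arithmetic-progression harmonic asymptotic uniform enough to pass the limit through the finite double sum, and dealing with the boundary cases where $k - \beta_r = x$ (so that the progression range degenerates). These contribute $o(1)$ and may be discarded, but this needs care, exactly as in the proof of Theorem~\ref{thm:limit-main}. The reindexing in the last paragraph, by contrast, is purely combinatorial and should be routine once set up.
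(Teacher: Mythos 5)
Your proposal is correct and follows essentially the same route as the paper's proof: reduce the summand to $\floor{\frac{jx}{n} + \frac{r}{q}}$ via the integer-shift property of the floor, group by residue class mod $q$, apply the layer-cake identity, use the density-$\frac1q$ harmonic asymptotic on arithmetic progressions, and reindex the resulting double sum into $\{1,\dots,\floor{qx}\}$ using $\gcd(p,q)=1$. The only cosmetic difference is that you parametrize the classes by $j \bmod q$ with $\beta_r = \{rp/q\}$ while the paper parametrizes directly by $r = kp \bmod q$; these are the same decomposition.
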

In other words, the limit function here is a rescaling of the limit function $B(x)$ in \eqref{eq:B-limit}, namely
\[
	\lim_{n \to \infty} \left(B_n(x + \tfrac{p}{q}n) - B_n(\tfrac{p}{q}n) \right) 
	= \frac1q B(qx) .
\]
The scaling factor $q$ is independent of the numerator $p$.
By Remark~\ref{rmk:main}, the limit function is equal to 
\[
	\frac1{q} B(qx) 
	\;=\; \displaystyle \frac1{q} \int_0^{qx} \frac{\floor{t}}{t} dt
	\;=\; \int_0^x \frac{\floor {qs}}{qs} ds
	\;=\; \frac1q \sum_{k = 1}^\infty \log^+\left( \frac{qx}{k} \right).
\]

\begin{figure}[h]
	\centering
	\includegraphics[scale=0.5]{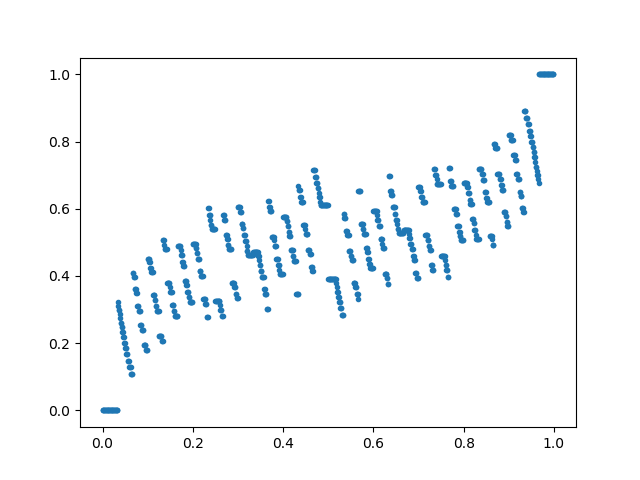}
	\caption{Incremented staircase %$\tfrac1{30}\floor{30x} - A_{30}(x)$, 
	$D_{30}(x)$,
	normalized to height one.}
	\label{fig:diff-graph}
\end{figure}
Next we investigate fractal behavior that arises from taking the difference of consecutive Farey staircases.
As before let
\[
	D_n(x) \coloneqq \ALR_n(x) - \ALR_{n-1}(x)
	%= \frac1n\sum_{k=1}^n \tfrac1k \floor{kx} - \frac1{n-1}\sum_{k=1}^{n-1} \tfrac1k \floor{kx}
\] 
for $n\geq 2$,
and let
$
	D_n^{\rm max} = \max\{ D_n(x) : x \in \RR\}.
$
Figure~\ref{fig:diff-graph} shows the graph of $D_{30}(x) / D_{30}^{\rm max}$.
%(Later we show that $D_n^{\rm max} = \frac{\log n}{n^2} + O(1/n^3)$.)
Note that Figure~\ref{fig:diff-graph} shows a pattern of ``negatively-sloped stripes.''
The phenomenon of these segments persists in the limit $n \to \infty$, and is explained by the following result.
\begin{thm}
\label{thm:limit-diff}
Suppose $x \geq 0$. As $n\to \infty$,
\begin{equation}
	\lim_{n\to\infty} n^2 D_n\left(\tfrac1{n} x\right) 
	= \sum_{k = 1}^{\floor{x}} \left( 1 - \log(\tfrac{x}{k}) \right).
	% = \floor{x} + \log( \floor{x}! ) - \floor{x} \log x .
\end{equation}
\end{thm}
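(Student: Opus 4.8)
The plan is to reduce the statement to Theorem~\ref{thm:limit-main} by deriving an exact finite identity expressing $n^2 D_n(\tfrac1n x)$ in terms of $B_n(x)$. Writing $S_n(x) := \sum_{j=1}^n \tfrac1j \floor{jx} = n\,\ALR_n(x)$, so that $B_n(x) = S_n(\tfrac1n x)$, the definition $D_n = \ALR_n - \ALR_{n-1}$ becomes $D_n(x) = \tfrac{1}{n}S_n(x) - \tfrac{1}{n-1}S_{n-1}(x)$. Since $S_n(x) = S_{n-1}(x) + \tfrac1n \floor{nx}$, the $S_{n-1}$ terms combine and one obtains the telescoped form
\[
	D_n(x) = \frac{\floor{nx}}{n^2} - \frac{S_{n-1}(x)}{n(n-1)}.
\]

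First I would substitute $x \mapsto \tfrac1n x$ into this identity. The key simplification is that the final floor becomes $\floor{n \cdot \tfrac1n x} = \floor{x}$, which is exactly the rescaling that makes the origin-zoom clean. I would then eliminate $S_{n-1}(\tfrac1n x)$ in favor of $B_n(x)$ using the one-term relation $S_{n-1}(\tfrac1n x) = S_n(\tfrac1n x) - \tfrac1n \floor{x} = B_n(x) - \tfrac{\floor{x}}{n}$. After collecting the $\floor{x}$ terms, the computation collapses to the clean exact identity
\[
	n^2 D_n\!\left(\tfrac1n x\right) = \frac{n}{n-1}\bigl(\floor{x} - B_n(x)\bigr),
\]
valid for every $n \geq 2$ and every $x \geq 0$.

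Finally I would let $n\to\infty$. The factor $\tfrac{n}{n-1}\to 1$, and Theorem~\ref{thm:limit-main} gives $B_n(x)\to B(x) = \sum_{k=1}^{\floor{x}}\log(x/k)$, so the product converges to $\floor{x} - B(x)$. Writing $\floor{x} = \sum_{k=1}^{\floor{x}} 1$ and combining the two sums yields the claimed limit $\sum_{k=1}^{\floor{x}}\bigl(1 - \log(\tfrac{x}{k})\bigr)$. There is no serious analytic obstacle here: once the exact identity is in hand the result is immediate, and because the theorem asserts only a pointwise limit, the pointwise convergence supplied by Theorem~\ref{thm:limit-main} suffices---no uniform control is needed. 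The only point requiring care is the bookkeeping in the previous step: one must relate $S_{n-1}(\tfrac1n x)$ to $B_n(x)$ (whose argument is $\tfrac1n x$) rather than to $B_{n-1}(x)$ (whose argument would be $\tfrac{1}{n-1}x$), since the latter would introduce a genuinely different floor function and spoil the telescoping.
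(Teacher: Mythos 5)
Your proposal is correct and takes essentially the same route as the paper: your telescoped identity $n(n-1)D_n(\tfrac1n x) = \floor{x} - B_n(x)$ is exactly the paper's Proposition~\ref{prop:diff-farey}(b) specialized at $x/n$, after which both arguments invoke Theorem~\ref{thm:limit-main}. The only cosmetic difference is that you absorb the discrepancy between $n^2$ and $n^2-n$ into the explicit prefactor $\tfrac{n}{n-1}\to 1$, whereas the paper handles it by noting $nD_n(x)\to 0$ via Lemma~\ref{lem:staircase-bounds}(b); you are right that pointwise convergence of $B_n$ suffices.
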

\begin{figure}[h]
	\centering
	\includegraphics[scale=0.5]{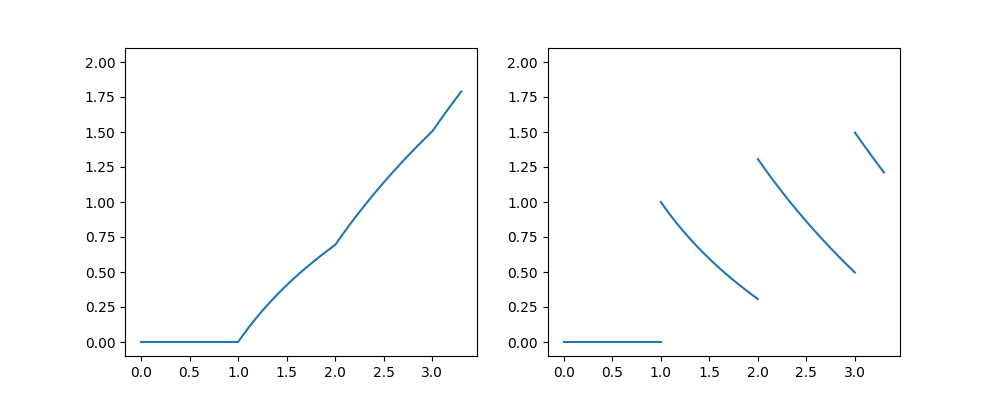}
	\caption{Limit functions $B(x) = \sum_{k = 1}^{\floor{x}} \log(x / k)$, left, and ${\floor{x} - B(x)}$, right.}
	\label{fig:B-limit}
\end{figure}

The limit function is in fact $\floor{x} - B(x)$,
where $B(x)$ is the limit function from Theorem~\ref{thm:limit-main}.
For a graph of the limit function, see the right side of Figure~\ref{fig:B-limit}.
In particular, the limit value of $n^2 D_n\left(\tfrac1{n} x\right)$ is nonnegative as a consequence of equation~\eqref{eq:B-vs-floor}.

By Stirling's approximation, 
% $\log n! = n \log n - n + \frac12 \log n + O(1)$ as $n\to\infty$,
we have
\begin{equation*}
	\lim_{n\to\infty} n^2 D_n\left(\frac1{n} x\right) 
	= \floor{x} - B(x) 
	=  \frac12\log x + O(1)
	\qquad \text{as } x \to \infty.
\end{equation*}

%\begin{cor}
%$\lim_{n\to\infty} n^2 D_n\left(\tfrac1{n} x\right) \geq 0$.
%\end{cor}

\begin{figure}[h]
\makebox[\textwidth]{
	\centering
	\includegraphics[scale=0.5]{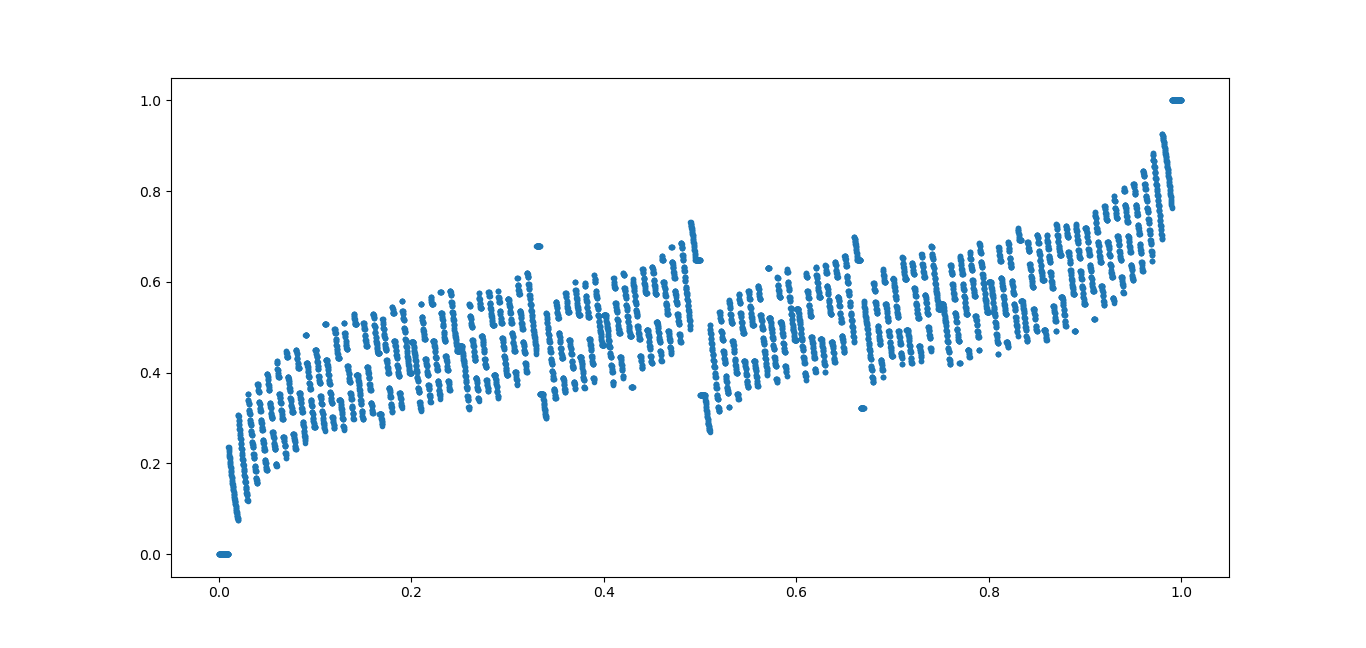}
}
	\caption{Incremented staircase %$\tfrac1{30}\floor{30x} - A_{30}(x)$, 
$D_{100}(x)$, %A_{100}(x) - A_{99}(x)$,
normalized to height one.}
	\label{fig:diff-graph-100}
\end{figure}

\subsection{Olympiad problem}
\label{sec:intro-olympiad}

Here we describe the initial motivation for this work.

Problem 5 of the 1981 U.S.A. Mathematical Olympiad was to prove that
\begin{equation}
\label{eq:olympiad}
	\floor{nx} \geq \floor{x} + \tfrac12 \floor{2x} + \tfrac13 \floor{3x} 
	+ \cdots + \tfrac1n \floor{nx}
\end{equation}
where 
$x$ is a real number, $n$ is a positive integer, and
$\floor{t}$ denotes the greatest integer less than or equal to $t$.  
% Proofs of \eqref{eq:olympiad} are given by Klamkin~\cite[pp. 92--92]{klamkin}, Larson~\cite[p. 279]{larson}, and D.~R.~Richman \cite{richman}.
Solutions can be found in \cite{klamkin,larson,richman}, and in Appendix~\ref{sec:olympiad} of this work.
%This has been described as an ``unusually hard USAMO problem'' [cite: some website].
%
With some algebraic manipulation,
the Olympiad inequality \eqref{eq:olympiad} is equivalent to the condition
$\ALR_{n - 1}(x) \leq \ALR_{n}(x)$
on Farey staircases.

Let
\[ 
	f_n(x) := 
	\floor{n x} - \left(\floor{x} + \tfrac12 \floor{2x} + \tfrac13 \floor{3x}
	+ \cdots + \tfrac1n\floor{n x}\right) ,
\]
and let $S$ denote the set values taken by $f_n$ for all $n$, 
\[
	S = \{ f_n(x) : x \in \RR,\, n = 1,2,\ldots \} .
\]
The Olympiad inequality states that $S$ does not contain negative values.
The following result of D. R. Richman~\cite{richman} gives a more complete description of $S$.
\begin{theorem}[see {\cite[Theorem 1.1]{richman}}]
Let $\lambda = 1 - \log 2$.
\begin{enumerate}[(i)]
\item 
The set $S$ is dense in the interval
$[ \lambda ,\, +\infty).$

\item 
The intersection $S \cap (-\infty, \lambda - \epsilon]$ has finitely many elements for any $\epsilon > 0$.
\end{enumerate}
\end{theorem}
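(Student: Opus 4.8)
The plan is to combine the scaling limit of Theorem~\ref{thm:limit-main} with an elementary identity for $f_n$. Writing $\{t\} \coloneqq t - \floor{t}$ for the fractional part and substituting $\floor{kx} = kx - \{kx\}$, one finds
\begin{equation*}
	f_n(x) = \sum_{k=1}^n \frac{\{kx\}}{k} - \{nx\}.
\end{equation*}
Two consequences are immediate: $f_n$ is $1$-periodic and takes only finitely many values (it is constant on the intervals cut out by the Farey fractions of order $n$), so $S = \bigcup_n f_n([0,1))$ is a countable union of finite sets; and by the Olympiad inequality~\eqref{eq:olympiad} we have $S \subseteq [0,\infty)$. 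I will also use $f_n(\tfrac1n x) = \floor{x} - B_n(x)$, which by Theorem~\ref{thm:limit-main} gives $f_n(\tfrac1n x) \to g(x)$, where $g(x) \coloneqq \floor{x} - B(x)$ is the limit function on the right of Figure~\ref{fig:B-limit}.

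For part (i) I would show that $g$ maps $[1,\infty)$ onto $(\lambda,\infty)$. On each interval $[k,k+1)$ the function $g(x) = k - (k\log x - \log k!)$ is decreasing, dropping by $c_k \coloneqq k\log\frac{k+1}{k}$ from $g(k)$ to $m_k \coloneqq g((k+1)^-)$; at each integer $g$ jumps up by exactly $1$, so $g(k+1) = m_k + 1$ and the image of the $k$-th branch is $(m_k, g(k)]$. Since $c_k = k\log(1+\tfrac1k)$ increases to $1$ with $c_1 = \log 2$, we have $c_k + c_{k+1} \geq 2\log 2 > 1$, which forces consecutive branch images to overlap; as the minima $m_k$ increase from $m_1 = \lambda$ while the maxima tend to $\infty$, the union is exactly $(\lambda,\infty)$, with the infimum $\lambda$ unattained. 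Because each $f_n(\tfrac1n x)$ lies in $S$ and converges to $g(x)$, every point of $(\lambda,\infty)$ is a limit of points of $S$, whence $[\lambda,\infty) \subseteq \overline{S}$.

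For part (ii), since $S \subseteq [0,\infty)$ it suffices to prove $S \cap [0,\lambda-\epsilon]$ is finite. If it were infinite, then—each $f_n$ having finitely many values—there would be $n_j \to \infty$ and $x_j \in [0,1)$ with $0 < f_{n_j}(x_j) \leq \lambda - \epsilon$. The bound on $f_{n_j}(x_j)$ forces $\sum_{k=1}^{n_j} \{kx_j\}/k$ to stay bounded; but for any fixed $\alpha \notin \ZZ$ the series $\sum_k \{k\alpha\}/k$ diverges (the terms $\{k\alpha\}$ exceed $\tfrac14$ on a set of $k$ of positive density), so every limit point of $(x_j)$ is an integer, i.e.\ $\mathrm{dist}(x_j,\ZZ) \to 0$. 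The reflection identity $f_n(1-s) = H_n - 1 - f_n(s)$ (valid when no $ks$ is an integer, with $H_n$ the harmonic number) expresses the values approached from $1^-$ in terms of those approached from $0^+$, so it suffices to treat $x_j \to 0^+$; writing $y_j = n_j x_j$ we then have $f_{n_j}(x_j) = \floor{y_j} - B_{n_j}(y_j)$. When $(y_j)$ is bounded, the fact that each $B_n$ is nondecreasing while $B$ is continuous upgrades the pointwise convergence of Theorem~\ref{thm:limit-main} to local uniform convergence (a monotone sequence converging pointwise to a continuous limit converges locally uniformly), giving $f_{n_j}(x_j) \to g(y^*) \in \{0\} \cup (\lambda,\infty)$; since the values are positive and at most $\lambda-\epsilon$, this is a contradiction.

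The main obstacle is the remaining regime $y_j \to \infty$ (with $x_j \to 0^+$), which also governs the $1^-$ side through the reflection identity once $H_n \to \infty$. Heuristically $f_{n_j}(x_j) \approx g(y_j) \sim \tfrac12\log y_j \to \infty$ by Remark~\ref{rmk:main}, so these values ought to be large; but the pointwise statement of Theorem~\ref{thm:limit-main} does not control $\floor{y} - B_n(y)$ as $y \to \infty$ jointly with $n$ (indeed for $n=1$ this quantity vanishes identically, so no bound uniform over all $n$ can hold). What is needed is a lower bound showing $\floor{y} - B_n(y) \to \infty$ along $y \to \infty$, $n \to \infty$ with $y \le n$—equivalently a quantitative lower bound on the tail $\sum_{\floor{1/x} < k \le n} \{kx\}/k$ for small $x$, which is precisely where the extremal constant $\lambda = 1 - \log 2$ is pinned down (through sums of the shape $\sum_{K < k \le 2K}(x - \tfrac1k) \to 1 - \log 2$ with $K = \floor{1/x}$). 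Granting this uniform refinement of Theorem~\ref{thm:limit-main}, only finitely many $n$ can contribute values in $[0,\lambda-\epsilon]$ beyond the value $0$, so $S \cap [0,\lambda-\epsilon]$ is finite.
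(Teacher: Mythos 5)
First, a framing point: the paper does not actually prove this theorem --- it is quoted from \cite[Theorem 1.1]{richman} as background, and the only related result the paper itself establishes is the Corollary that $S_n$ is dense in $[1-\log 2,\infty)$, deduced from Theorem~\ref{thm:limit-main}. Your part (i) is correct and is essentially that deduction carried out in detail: the identity $f_n(\tfrac1n y)=\floor{y}-B_n(y)$, the pointwise convergence to $g(y)=\floor{y}-B(y)$, and the verification that $g$ maps $[1,\infty)$ onto $(\lambda,\infty)$. The branch analysis checks out: the overlap criterion $c_k+c_{k+1}\geq 2\log 2>1$, the monotonicity $m_{k+1}-m_k=1-c_{k+1}>0$, and $m_k\to\infty$ (since $1-c_j\sim \tfrac1{2j}$) together give that the union of branch images is exactly $(\lambda,\infty)$, whence $[\lambda,\infty)\subseteq\overline{S}$ (and in fact $\subseteq\overline{S_n}$ for every $n$, which is the paper's corollary).

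Part (ii) is where the real content of the cited theorem lies, and your argument has a genuine gap there --- one you flag yourself. Theorem~\ref{thm:limit-main} controls $\floor{y}-B_n(y)$ only for \emph{fixed} $y$ as $n\to\infty$; it says nothing about the joint regime $y\to\infty$ with $y\leq n$, and as you correctly observe no bound uniform over all $n$ is possible (for $n=1$ the quantity vanishes identically). Excluding small positive values in that regime requires a quantitative lower bound on the tail $\sum_{\floor{1/x}<k\leq n}\{kx\}/k$, which is precisely where $1-\log 2$ is pinned down in \cite{richman} and is not recoverable from anything proved in this paper; ``granting this uniform refinement'' is assuming the hard half of the theorem. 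Two smaller soft spots in the same part: the reduction of the $x_j\to 1^-$ case via $f_n(1-s)=H_n-1-f_n(s)$ converts ``$f_n$ small near $1^-$'' into ``$f_n$ within $O(1)$ of its maximum near $0^+$,'' an upper-bound question that your subsequent lower-bound analysis does not address; and the step ``every limit point of $(x_j)$ is an integer'' needs the lower semicontinuity of $x\mapsto\sum_{k\leq N}\{kx\}/k$ to pass from divergence at a fixed $\alpha\notin\ZZ$ to divergence along $x_j\to\alpha$ (this can be patched, but should be said). In summary: part (i) stands and matches the route the paper itself takes for its corollary; part (ii) is an honest identification of the obstruction rather than a proof.
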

%In particular, elements of $S$ smaller than $1 - \log 2$ are given by  
%$0$, $\frac{4}{15}$, and all the partial sums $t_m = \sum_{k=1}^m \frac{1}{2k(2k+1)}$ for $m \geq 1.$

Our motivation was to understand the appearance of the curious constant $1 - \log 2$ in the structure of $S$.
Note that
\[
	f_n\left( \frac1{n} x \right) = \floor{x} - B_n(x).
	% f_n(x) = \floor{n x} - B_n(n x).
	% = \floor{x} - n \ALR_n(\frac1{n} x) .
	% = (n^2-n)  \,D_n\left( x \right).
\]
The main result of this paper, Theorem~\ref{thm:limit-main}, 
characterizes the values of $f_n(x)$ in the limit $n \to \infty$, 
which implies the following corollary.
Let $S_n$ denote the set of values taken by $f_k$ for $k \geq n$, i.e.
\[
	S_n = \{ f_k(x) : x \in \RR,\, k \geq n \}.
\]

\begin{corollary}[to Theorem~\ref{thm:limit-main}]
For any $n \geq 1$,
the set $S_n$ is dense in $[1-\log 2, +\infty)$.
\end{corollary}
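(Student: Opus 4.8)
The plan is to transfer the pointwise convergence of Theorem~\ref{thm:limit-main} through the identity $f_k(\tfrac1k x) = \floor{x} - B_k(x)$ recorded just above the corollary. Since $\floor{x}$ does not depend on $k$, that identity together with Theorem~\ref{thm:limit-main} gives, for each fixed $x \geq 0$,
\[
	\lim_{k \to \infty} f_k\!\left(\tfrac1k x\right) = \floor{x} - B(x) \eqqcolon h(x),
\]
the same limit function $\floor{x} - B(x)$ that appears after Theorem~\ref{thm:limit-diff}. The corollary will then reduce to two independent claims: first, that the range of $h$ on $x \geq 1$ is dense in $[1 - \log 2, +\infty)$; and second, that any value $h(x_0)$ can be approximated to arbitrary precision by genuine values $f_k(\tfrac1k x_0) \in S_n$ with $k$ as large as we like.

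For the range analysis I would use the closed form $B(x) = \floor{x}\log x - \log(\floor{x}!)$ from Remark~\ref{rmk:main}(iii), which on each interval $k \leq x < k+1$ with $k \geq 1$ gives $h(x) = k(1 - \log x) + \log(k!)$. This is continuous and strictly decreasing in $x$, so its image on $[k,k+1)$ is a half-open interval $(m_k, M_k]$ with top value $M_k = h(k) = k(1 - \log k) + \log(k!)$ and unattained infimum $m_k = k(1 - \log(k+1)) + \log(k!)$. I would then record three elementary facts: (a) $m_1 = 1 - \log 2$ and $m_k$ is strictly increasing (so $\inf_k m_k = 1 - \log 2$); (b) by Stirling's approximation, $M_k = \tfrac12\log(2\pi k) + o(1) \to \infty$; and (c) consecutive ranges overlap, $m_{k+1} < M_k$ for all $k$. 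Granting (a)--(c), the intervals $(m_k, M_k]$ chain together into the single interval $(1 - \log 2, +\infty)$, so every $y > 1 - \log 2$ is attained as $h(x_0)$ for some $x_0 \geq 1$, while $y = 1 - \log 2$ is the limit of $h(x_0)$ as $x_0 \to 2^-$.

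With the range in hand, the density argument is a routine two-step estimate. Fix $y \in [1 - \log 2, +\infty)$, $\epsilon > 0$, and $n \geq 1$. First choose $x_0 \geq 1$ with $|h(x_0) - y| < \epsilon/2$, possible by the previous paragraph. Next, since $f_k(\tfrac1k x_0) \to h(x_0)$, choose $k \geq n$ large enough that $|f_k(\tfrac1k x_0) - h(x_0)| < \epsilon/2$. Then $f_k(\tfrac1k x_0)$ belongs to $S_n$ and lies within $\epsilon$ of $y$. As $y$ and $\epsilon$ were arbitrary, $S_n$ is dense in $[1 - \log 2, +\infty)$; the argument is insensitive to $n$ precisely because convergence lets us push the index $k$ as far out as needed.

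The main obstacle is item (c), the overlap $m_{k+1} < M_k$: without it the image of $h$ could contain gaps that survive in the limit and block density at interior points. Writing $\phi(t) = t\log(1 + 1/t)$, one computes $M_k - m_k = \phi(k)$ and $m_{k+1} - m_k = 1 - \phi(k+1)$, so the overlap is equivalent to $\phi(k) + \phi(k+1) > 1$. Since $\phi$ is increasing (as $\phi'(t) = \log(1+1/t) - \tfrac{1}{t+1} > 0$), it suffices that $\phi(1) + \phi(2) = \log 2 + 2\log\tfrac32 = \log\tfrac92 > 1$, which holds with room to spare. The clean relation $M_{k+1} = m_k + 1$ makes the ``upward sweep'' of the ranges transparent, and everything else in the proof is bookkeeping layered on top of Theorem~\ref{thm:limit-main}.
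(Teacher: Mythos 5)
Your proof is correct, and it follows the route the paper intends: the paper states this corollary as an immediate consequence of Theorem~\ref{thm:limit-main} without supplying details, and your argument fills in exactly the missing steps --- computing the image of the limit function $h(x)=\floor{x}-B(x)$ on each interval $[k,k+1)$, verifying via $\phi(k)+\phi(k+1)>1$ that the consecutive images $(m_k,M_k]$ chain into $(1-\log 2,+\infty)$, and then transferring density to $S_n$ through the pointwise convergence $f_k(\tfrac1k x_0)\to h(x_0)$ with $k\geq n$ taken arbitrarily large. All the computations (the closed forms for $m_k$, $M_k$, the relation $M_{k+1}=m_k+1$, and the monotonicity of $\phi$) check out.
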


\subsection{Related work}

Interesting problems concerning combinations of floor functions were posed by Ramanujan~\cite{ramanujan}
and further generalized by Somu and Kukla~\cite{somu-kukla}.
Another useful floor function identity is attributed to Hermite,
with recent generalizations given by Aursukaree, Khemaratchatakumthorn, and Pongsriiam~\cite{aursukaree-et-al}.
Other sums of scaled floor function are considered by Thanatipanonda and Wong~\cite{thanatipanonda}.

Niederreiter~\cite{niederreiter} and
Dress~\cite{dress} proved bounds on the discrepancy of the Farey sequence, which concerns the spacing between consecutive fractions.
Kanemitsu and Yoshimoto~\cite{kanemitsu-yoshimoto} connect the Riemann hypothesis to certain estimates on sums of Farey fractions.
Lagarias and Mehta~\cite{lagarias-mehta} relate the Riemann hypothesis to properties of the product of Farey fractions of a given order.
Kunik~\cite{kunik} also studies a family of real-valued functions derived from Farey fractions, and their limiting behavior.

\subsection{Organization}
This paper is organized as follows. 
In Section~\ref{sec:preliminary} we derive some basic identities satisfied by the Farey staircase functions $\ALR_n(x)$ and their incremented functions $D_n(x)$.
In Section~\ref{sec:proofs} we prove Theorems~\ref{thm:limit-main}, \ref{thm:limit-extension}, and \ref{thm:limit-diff}.
In Section~\ref{sec:dilation} we give a heuristic argument for  Theorem~\ref{thm:limit-main} that provides an alternative perspective.
Finally in Appendix~\ref{sec:olympiad} we include a proof of the Olympiad inequality~\eqref{eq:olympiad}.

\section{Preliminaries}
\label{sec:preliminary}

Let $\mathcal F_n $ denote the Farey fractions of order $n$,
\[
	\mathcal F_n \coloneqq \bigcup_{m=1}^n \frac1{m} \ZZ,
\]
and let $\mathcal F_n^{[0,1]} = \mathcal F_n \cap [0,1]$
denote the Farey fractions on the unit interval.
Recall that the {\em Farey staircase function} of order $n$ is 
\[
	A_n(x) = \frac1n \sum_{k=1}^n \frac1k \floor{kx}
\]
and the {\em incremented staircase function} of order $n$ is 
\[
	D_n(x) = A_n(x) - A_{n-1}(x).
\]

The following identities are straightforward to verify.

\begin{prop}
\label{prop:diff-farey}
The functions $A_n(x)$ and $D_n(x)$ satisfy the following, for $n \geq 2$.
\hfill
\begin{enumerate}[(a)]
\item 
$\displaystyle
D_n(x) = \frac1{n} \left( \frac1n \floor{nx} - A_{n-1}(x)\right); 
$

\item 
$\displaystyle
D_n(x) = \frac1{n-1} \left( \frac1n \floor{nx} - A_n(x)\right).
$

\end{enumerate}
\end{prop}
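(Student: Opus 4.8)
The plan is to reduce both identities to a single telescoping relation and then read each one off by regrouping. Writing out the definition $A_n(x) = \frac1n \sum_{k=1}^n \frac1k \floor{kx}$, so that $n A_n(x) = \sum_{k=1}^n \frac1k \floor{kx}$, and subtracting the analogous expression $(n-1) A_{n-1}(x) = \sum_{k=1}^{n-1} \frac1k \floor{kx}$, the sum telescopes and all but the top term cancels:
\begin{equation}
	n A_n(x) - (n-1) A_{n-1}(x) = \tfrac1n \floor{nx}.
\end{equation}
This is the one computation the whole proof rests on, and it is immediate from the definitions.

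To obtain part (a), I would split the left-hand side as $n A_n(x) - (n-1)A_{n-1}(x) = n\big(A_n(x) - A_{n-1}(x)\big) + A_{n-1}(x) = n D_n(x) + A_{n-1}(x)$. Substituting into the telescoping relation and solving for $D_n(x)$ gives $n D_n(x) = \tfrac1n \floor{nx} - A_{n-1}(x)$, which is exactly (a). For part (b), I would instead regroup the same left-hand side as $(n-1)\big(A_n(x) - A_{n-1}(x)\big) + A_n(x) = (n-1) D_n(x) + A_n(x)$; substituting and solving yields $(n-1) D_n(x) = \tfrac1n \floor{nx} - A_n(x)$, which is (b).

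There is no genuine obstacle here: the entire content is the telescoping identity, and the two formulas differ only in whether one peels off the single copy of $D_n(x) = A_n(x) - A_{n-1}(x)$ so as to eliminate $A_n$ in favor of $A_{n-1}$, or the reverse. The only point requiring care is the index bookkeeping—keeping the weights $n$ and $n-1$ attached to the correct staircase—together with noting that the hypothesis $n \geq 2$ is precisely what makes the division by $n-1$ in part (b) legitimate.
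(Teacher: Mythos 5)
Your proof is correct and rests on the same key identity as the paper's, namely $nA_n(x) - (n-1)A_{n-1}(x) = \tfrac1n\floor{nx}$ (the paper's equation for $nA_n(x)$), with only a cosmetic difference in how the subsequent algebra is organized. Both arguments are essentially identical.
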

\begin{proof}
Note that
\begin{equation}
\label{eq:alr-increment}
	n A_n(x) = \sum_{k = 1}^n \frac1{k} \floor{kx} = \frac1n \floor{nx} + (n-1)A_{n-1}(x).
\end{equation}
Solve the above equation for $A_n(x)$ and subtract $A_{n-1}(x)$, to obtain for $n \geq 2$
\begin{align*}
	A_n(x) - A_{n-1}(x)
	&= \frac1n \left( \frac1n \floor{nx} + (n-1)A_{n-1}(x) \right) - A_{n-1}(x) \\
	&= \frac1{n} \left( \frac1{n} \floor{nx} - A_{n-1}(x) \right), \\
\end{align*}
as claimed in (a).
If we instead use \eqref{eq:alr-increment} to solve for $A_{n-1}(x)$, then subtract from $A_n(x)$,
we obtain for $n \geq 2$ that
\begin{align*}
	A_n(x) - A_{n-1}(x) &= A_n(x) - \frac1{n-1} \left( - \frac1{n}\floor{nx} + n A_n(x) \right) \\
&= \frac1{n-1} \left( \frac1{n} \floor{nx} - A_n(x) \right),
\end{align*}
as claimed in (b).
\end{proof}

%\begin{prop}
%The following are equivalent:
%\begin{enumerate}[(a)]
%\item 
%$A_n(x) \geq A_{n-1}(x)$ for all $x$;
%
%\item 
%$\frac1n \floor{n x} \geq A_{n - 1}(x)$ for all $x$;
%
%\item 
%$\frac1n \floor{n x} \geq A_n(x)$ for all $x$.
%\end{enumerate}
%\end{prop}

\begin{lemma}
\label{lem:staircase-bounds}
	\hfill
	\begin{enumerate}[(a)]
		\item For $n \geq 1$, we have 
		$\displaystyle A_n(x) \leq \frac{1}{n}\floor{nx} \leq A_n(x) + \frac{\log n}{n}$.
		\item
		For $n \geq 2$, we have
		$\displaystyle 0 \leq D_n(x) \leq \frac{\log n}{n(n - 1)}$.
	\end{enumerate}
\end{lemma}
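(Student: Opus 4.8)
The plan is to reduce both parts of the lemma to a single inequality on the function $f_n$ from Section~\ref{sec:intro-olympiad}, and then to obtain the sharp constant by a reflection trick. Throughout I write $H_n = \sum_{k=1}^n \frac1k$ for the harmonic number, and recall from the introduction that $f_n(x) = \floor{nx} - nA_n(x)$, so that
\[
	\tfrac1n\floor{nx} - A_n(x) = \tfrac1n\, f_n(x).
\]
For the lower bounds, note that the Olympiad inequality~\eqref{eq:olympiad} states exactly $nA_n(x) = \sum_{k=1}^n \frac1k\floor{kx} \le \floor{nx}$, which is the lower bound $A_n(x)\le\frac1n\floor{nx}$ in (a). Feeding this into Proposition~\ref{prop:diff-farey}(b), which gives $D_n(x) = \frac1{n-1}\big(\frac1n\floor{nx} - A_n(x)\big)$, immediately yields $D_n(x)\ge 0$, the lower bound in (b).

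Next I would observe that the two upper bounds are in fact equivalent. By Proposition~\ref{prop:diff-farey}(b) and the identity above, $D_n(x) = \frac1{n-1}\cdot\frac1n f_n(x) = \frac{f_n(x)}{n(n-1)}$, while $\frac1n\floor{nx}-A_n(x) = \frac1n f_n(x)$. Hence both upper bounds follow at once from the single estimate $f_n(x)\le \log n$, which becomes the one thing left to prove. I would prove it by first establishing the sharper bound $f_n(x)\le H_n-1$ and then comparing with the integral, $H_n - 1 = \sum_{k=2}^n\frac1k \le \int_1^n \frac{dt}{t} = \log n$.

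The heart of the argument is a reflection. Since $f_n(x+1)=f_n(x)$, it suffices to bound $f_n$ on $[0,1)$. For irrational $x\in(0,1)$ one has $\lceil kx\rceil = \floor{kx}+1$, hence $\floor{k(1-x)} = k - \floor{kx} - 1$ for every $k\ge 1$. Applying the Olympiad inequality~\eqref{eq:olympiad} to $1-x$ in place of $x$ then gives $\sum_{k=1}^n \frac{1}{k}\big(k - \floor{kx} - 1\big) \le n - \floor{nx} - 1$, which after expanding the left side as $n - nA_n(x) - H_n$ rearranges to precisely $\floor{nx} - nA_n(x) \le H_n - 1$, i.e.\ $f_n(x)\le H_n-1$. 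Because $f_n$ is a right-continuous step function whose value on each interval of constancy (between consecutive Farey fractions) is attained at an irrational point, this bound passes from irrational $x$ to all $x\in[0,1)$, and then to all $x$ by periodicity.

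The main obstacle is exactly the sharp constant in $f_n(x)\le H_n-1$. Every naive term-by-term estimate—bounding $\{kx\}\le 1$, or $\floor{kx}\ge kx-1$ in each summand—loses the contribution of the $k=1$ term and produces only $f_n(x)\le H_n$ or $H_{n-1}$, both of which exceed $\log n$; and the extremal behavior of $f_n$ just below integers (where $f_n = H_n - 1$) shows the constant $H_n-1$ cannot be improved. The reflection $x\mapsto 1-x$ is what recovers the missing unit and converts the desired \emph{upper} bound into the already-available \emph{lower} bound~\eqref{eq:olympiad}. The remaining points—verifying the identity $\floor{k(1-x)} = k-\floor{kx}-1$, the periodicity $f_n(x+1)=f_n(x)$, and the density/right-continuity passage from irrational to arbitrary $x$—are routine.
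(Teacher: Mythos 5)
Your proof is correct, and it is more self-contained than the paper's. The paper's own proof has the same skeleton as yours for everything except the upper bound in (a): the lower bound is the USAMO inequality \eqref{eq:olympiad} (cited, and reproved in Appendix~\ref{sec:olympiad}), and part (b) is obtained by feeding (a) into Proposition~\ref{prop:diff-farey}(b). For the crucial upper bound, however, the paper simply cites \cite[Theorem 6.1]{richman} for the estimate $\frac1n\floor{nx} - A_n(x) \leq \frac1n\sum_{k=2}^n\frac1k$ and then applies the harmonic-sum comparison $\sum_{k=2}^n \frac1k \leq \log n$, whereas you actually prove the cited estimate: your reflection $x \mapsto 1-x$ at irrational points, using $\floor{k(1-x)} = k - \floor{kx} - 1$, converts the Olympiad \emph{lower} bound into the sharp \emph{upper} bound $f_n(x) \leq \sum_{k=2}^n \frac1k$, and the extension from irrational $x$ to all $x$ via right-continuity of the step function $f_n$ and periodicity is sound. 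What the paper's route buys is brevity by deferring to prior work; what yours buys is a complete argument within the paper's own toolkit (the appendix proof of \eqref{eq:olympiad} plus elementary floor identities), together with the observation that the constant $\sum_{k=2}^n\frac1k$ is attained just below integers and hence cannot be improved. The one point worth stating explicitly if you write this up is the case $n=1$ (where the claimed upper bound forces $A_1(x)=\floor{x}$, which holds trivially), since your reflection step is really only needed for $n\geq 2$.
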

\begin{proof}
	(a) The lower bound is equivalent to the USAMO inequality \eqref{eq:olympiad}; a proof can be found in \cite{klamkin}, \cite{larson}, or \cite[Lemma 2.1]{richman}.
	The upper bound follows from \cite[Theorem 6.1]{richman}, along with the harmonic sum bound 
	$\displaystyle \sum_{k = 2}^n \frac{1}{k} \leq \log n $.

	(b) Combine Proposition~\ref{prop:diff-farey} (b) with part (a) of this lemma.
\end{proof}

The following lemma allows us to interchange a sum for an integral.
\begin{lemma}
\label{lem:sum-integral}
Suppose $\{a_n\}$ and $\{ b_n \}$ are positive integer sequences which 
satisfy
\begin{itemize}
\item 
$a_n \leq b_n$, and

\item
$a_n \to \infty$ and $b_n \to \infty$
as $n\to \infty$.
\end{itemize}
Then 
$\displaystyle
\lim_{n\to \infty} \sum_{k = a_n }^{b_n} \frac1k = \lim_{n\to\infty} \int_{a_n}^{b_n} \frac{dt}{t} 
% = \lim_{n\to \infty} \log(b_n / a_n)
.$
\end{lemma}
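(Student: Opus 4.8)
The plan is to exploit the monotonicity of $t \mapsto 1/t$ to sandwich the partial sum between two integrals, and then to show that the resulting boundary discrepancies vanish as $n \to \infty$. Equivalently, I will prove the sharper statement that the \emph{difference}
\[
	\sum_{k = a_n}^{b_n} \frac1k - \int_{a_n}^{b_n} \frac{dt}{t}
\]
tends to $0$; this forces the two limits to exist simultaneously and to coincide whenever either exists.

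First I would record the standard integral comparison for the positive decreasing function $1/t$: for each integer $k \geq 2$ we have $\int_k^{k+1} \frac{dt}{t} \leq \frac1k \leq \int_{k-1}^k \frac{dt}{t}$. Summing over $k$ from $a_n$ to $b_n$ telescopes the integrals and yields
\[
	\int_{a_n}^{b_n + 1} \frac{dt}{t} \;\leq\; \sum_{k = a_n}^{b_n} \frac1k \;\leq\; \int_{a_n - 1}^{b_n} \frac{dt}{t},
\]
valid once $a_n \geq 2$, which holds for all large $n$ because $a_n \to \infty$. Subtracting the target integral $\int_{a_n}^{b_n} \frac{dt}{t}$ from all three terms collapses the outer integrals to single unit-length intervals:
\[
	\int_{b_n}^{b_n + 1} \frac{dt}{t} \;\leq\; \sum_{k = a_n}^{b_n} \frac1k - \int_{a_n}^{b_n} \frac{dt}{t} \;\leq\; \int_{a_n - 1}^{a_n} \frac{dt}{t}.
\]
By monotonicity of $1/t$, the left bound is at most $1/b_n$ and the right bound is at most $1/(a_n - 1)$.

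The final step is the limit. Since $a_n \to \infty$ and $b_n \to \infty$, both $1/b_n$ and $1/(a_n - 1)$ tend to $0$, so the squeeze theorem gives $\sum_{k = a_n}^{b_n} \frac1k - \int_{a_n}^{b_n} \frac{dt}{t} \to 0$, which is the claim. I do not anticipate a genuine obstacle here beyond careful bookkeeping at the endpoints; the only point requiring mild attention is that the upper comparison $\frac1k \leq \int_{k-1}^k \frac{dt}{t}$ requires $k \geq 2$, but since $a_n \to \infty$ this restriction affects only finitely many $n$ and is harmless in the limit. (An alternative route is to write the sum as a difference of harmonic numbers and invoke convergence of $H_N - \log N$ to the Euler--Mascheroni constant, but the direct integral comparison above is cleaner and self-contained.)
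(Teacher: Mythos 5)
Your proof is correct and takes essentially the same approach as the paper: both arguments use the monotonicity of $1/t$ to sandwich the difference between the sum and the integral by quantities of order $1/a_n$ and $1/b_n$, then apply the squeeze theorem. The only cosmetic difference is that the paper bounds the integral between two shifted sums while you bound the sum between two shifted integrals; your explicit remark that the conclusion should be read as ``the difference tends to zero'' is a welcome clarification of the statement.
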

\begin{proof}
By consideration of Riemann sums,
\[
	\sum_{k=a_n+1}^{b_n} \frac1k  
	\quad\leq\quad \int_{a_n}^{b_n} \frac{dt}{t}
	\quad\leq\quad \sum_{k=a_n}^{b_n-1} \frac1k ,
\]
which implies that
\[
	\frac1{a_n}
	\quad\geq\quad \left( \sum_{k=a_n}^{b_n} \frac1k \right) - \left( \int_{a_n}^{b_n} \frac{dt}{t} \right)
	\quad\geq\quad \frac1{b_n}.
\]
The result follows by taking $n \to \infty$,
since the hypotheses on $a_n$ and $b_n$ imply that
$\lim_{n\to\infty} 1/{a_n} = 0$ and $\lim_{n\to\infty} 1/{b_n} = 0$.
\end{proof}

\section{Proofs}
\label{sec:proofs}

\subsection{Farey staircase at $0$}
We are now ready to prove Theorem~\ref{thm:limit-main}, which states that
\begin{equation}
\label{eq:limit-repeat}
	\lim_{n \to \infty} B_n(x) 
	= \sum_{k = 1}^{\floor{x}} \log(x / k),
	% = \floor{x} \log x - \log( \floor{x}! )
\end{equation}
if $x \geq 0$ and $B_n(x)$ denotes the function
\[
	B_n(x) \coloneqq \sum_{k = 1}^n \tfrac{1}{k} \floor{\tfrac{k}{n} x}
	= \floor{\tfrac{1}{n}x} 
	+ \tfrac12 \floor{\tfrac{2}{n}x} 
	+ \tfrac13 \floor{\tfrac{3}{n}x} +
	\cdots + \tfrac1n \floor{x} .
\]

\begin{proof}[Proof of Theorem~\ref{thm:limit-main}]
For $x$ in the range $0 \leq x < 1$,
each summand in $B_n(x)$ vanishes so $B_n(x) = 0$.

Suppose we fix $x$ in the range $1 \leq x < 2$. 
For $k \in \{1,2,\ldots, n\}$, we have
\[ 
	\floor{\tfrac{k}{n} x} = \begin{cases}
	0 & \text{if } 1\leq k < \tfrac{n}{x}, \\
	1 & \text{if } \tfrac{n}{x} \leq k \leq n .
	\end{cases}
\]
Then
\begin{align*}
	B_n(x) = \sum_{k=1}^n \tfrac{1}{k} \floor{\tfrac{k}{n} x}
	= \sum_{k=\ceil{n/x}}^n \tfrac{1}{k} .
\end{align*}
By Lemma~\ref{lem:sum-integral},
\[ 
	\lim_{n\to \infty} \sum_{k=\ceil{n/x}}^n \frac{1}{k}
	= \lim_{n\to\infty} \int_{\ceil{n/x}}^n \frac{dt}{t}
	= \lim_{n\to \infty} \left(\int_{n/x}^n \frac{dt}{t} - \int_{n/x}^{\ceil{n/x}} \frac{dt}{t} \right)
	= \log x 
\] 
since the integral $\int_{n/x}^{\ceil{n/x}} \frac{dt}{t} \leq \frac{x}{n} $ vanishes in the limit.
%If $x$ is in the range $2\leq x < 3$,
%then 
%\begin{align*}
%B_n(x) = \sum_{k=1}^n \tfrac{1}{k} \floor{\tfrac{k}{n} x}
%= \sum_{k=\ceil{n/x}}^{\ceil{2n/x} - 1} \tfrac{1}{k}  + \sum_{k = \ceil{2n/x}}^n \tfrac{2}{k}
%= \sum_{k=\ceil{n/x}}^n \tfrac1k + \sum_{k=\ceil{2n/x}}^n \tfrac1{k} .
%\end{align*}
%Thus as $n \to \infty$, we have
%\begin{align*}
% \lim_{n\to \infty} B_n 
%&= \lim_{n\to\infty} \left( \int_{\ceil{n/x}}^n \frac1z dz + \int_{\ceil{2n/x}}^n \frac1w dw\right) \\
%&= \lim_{n\to\infty} \left( \log \frac{n}{\ceil{n/x}} + \log \frac{n}{\ceil{2n/x}} \right) 
%= \log x + \log(x/2)
%\end{align*}

In general, suppose $x$ is in the range $M \leq x < M + 1$ for a positive integer $M$.
Then if we group the summands in $B_n(x)= \sum_{k=1}^n \tfrac{1}{k} \floor{\tfrac{k}{n} x}$ according to the value of $\floor{\tfrac{k}{n} x}$, we obtain
\begin{align}
	B_n(x) &= \sum_{k=\ceil{n/x}}^{\ceil{2n/x} - 1} \tfrac{1}{k} + \sum_{k = \ceil{2n/x}}^{\ceil{3n/x} - 1} \tfrac{2}{k} + \cdots + \sum_{k = \ceil{(M-1)n/x}}^{\ceil{Mn/x} - 1} \tfrac{M - 1}{k} + \sum_{k = \ceil{Mn/x}}^{n} \tfrac{M}{k} \\
	&= \sum_{k=\ceil{n/x}}^n \tfrac1k + \sum_{k=\ceil{2n/x}}^n \tfrac1{k}  + \cdots + \sum_{k=\ceil{Mn/x}}^{n} \tfrac1k .
\end{align}
Then we apply Lemma~\ref{lem:sum-integral} to each sum,
\begin{align*}
	\lim_{n\to\infty} B_n(x) &= \sum_{j=1}^M \lim_{n\to\infty} \sum_{k = \ceil{jn/x}}^n \frac1k 
	= \sum_{j=1}^M \lim_{n\to\infty} \int_{\ceil{jn/x}}^n \frac{dt}{t} \\
	&= \sum_{j=1}^M \lim_{n\to\infty} \left( \int_{jn/x}^n \frac{dt}{t} - \int_{jn/x}^{\ceil{jn/x}} \frac{dt}{t} \right) 
	= \sum_{j=1}^M  \log (x/j) .
	% &= \sum_{j=1}^M \log x - \sum_{j=1}^M \log j
	% = M\log x - \log(M!) .
\end{align*}
This completes the proof, since $M$ was chosen to be $M = \floor{x}$.
\end{proof}

\subsection{Farey staircase at $p/q$}

We next address the fractal behavior of $A_n(x)$.
For a reduced fraction $\frac{p}{q}$, we call
\[
	A_n\left(x + \frac{p}{q} \right) - A_n\left(\frac{p}{q}\right)
\]
the $(p/q)$-centered Farey staircase of order $n$.
After zooming in by a factor of $n$, we will show these functions converge to a limit as $n \to \infty$.
Recall from the introduction that 
$\displaystyle
	A_n(x) = \frac1{n} \sum_{k=1}^n \tfrac1k \floor{k x}
$
and
$\displaystyle
	B_n(x) = \sum_{k=1}^n \tfrac1k \floor{\tfrac{k}{n}x}.
$

Theorem~\ref{thm:limit-extension} states that
for a reduced fraction $\frac{p}{q}$ and $x \geq 0$,
\begin{equation*}
	\lim_{n \to \infty} \left(B_n\left(x + \frac{p}{q}n\right) - B_n\left( \frac{p}{q}n \right) \right) 
	= \frac1q \sum_{k = 1}^{\floor{qx}} \log\left( \frac{qx}{k} \right).
	% = \frac1q \floor{qx} \log (qx) - \frac1q \log( \floor{qx}!) .
\end{equation*}
\begin{proof}[Proof of Theorem~\ref{thm:limit-extension}]
The summand in the expression $B_n(x + \frac{p}{q}n) - B_n(\frac{p}{q}n)$ is $\frac1k \floor{\frac{k}{n} x + \frac{kp}{q}} - \frac1k \floorfrac{kp}{q}$.
If $r \equiv kp \mod q$, then
\[
	\floor{ \frac{k}{n} x + \frac{kp}{q}} -  \floorfrac{kp}{q}
	= \floor{ \frac{k}{n} x + \frac{r}{q}} -  \floorfrac{r}{q}.
\]
Moreover, if the integer $r$ is chosen in the range $\{0, 1, 2, \ldots, q-1\}$, then $\floorfrac{r}{q} = 0$.
By grouping terms by the $q$-residue class of the product $kp$, we obtain
\begin{align}
	B_n(x + \tfrac{p}{q}n) - B_n(\tfrac{p}{q}n) 
	% &= \sum_{k=1}^n \frac1k \floor{ \frac{k}{n} x + \frac{kp}{q}} - 	\frac1k \floor{ \frac{kp}{q}} \\
	&= \sum_{k=1}^n \frac1k \left( \floor{ \frac{k}{n} x + \frac{kp}{q}} -  \floorfrac{kp}{q}\right) \\
	&= \sum_{r=0}^{q-1} \sum_{\substack{k=1 \\ kp \equiv r \, (q)}}^n \frac1k \floor{\frac{k}{n} x + \frac{r}{q}}. 
	\label{eq:with-r}
\end{align}
% Restricting to a single $q$-residue class $r$, 
For the inner summation, we have
\begin{align}
	\sum_{\substack{k = 1 \\ kp \equiv r \, (q)}}^n \frac1k \floor{\frac{k}{n} x + \frac{r}{q}}
	&= \sum_{\substack{k = 1 \\ kp \equiv r \, (q)}}^n \frac1k \,\sum_{j = 1}^{\infty} \mathds{1}\left( j \leq \floor{\frac{k}{n} x + \frac{r}{q}} \right) \\
	&= \sum_{j = 1}^{\infty} \sum_{\substack{k = 1 \\ kp \equiv r \, (q)}}^n \frac1k \,\mathds{1}\left( j \leq \floor{\frac{k}{n} x + \frac{r}{q}} \right)
	\label{eq:4}
\end{align}
where $\mathds{1}$ denotes the indicator function.
Since the index $k$ ranges from $1$ to $n$, the maximal value of $j$ satisfying $j \leq \floor{\frac{k}{n}x + \frac{r}{q}}$ is equal to $j_\mathrm{max} = \floor{x + \frac{r}{q}} $.

The condition $j \leq \floor{\frac{k}{n} x + \frac{r}{q}}$ can be solved for $k$ as follows,
\[
	j \leq \floor{\frac{k}{n}x + \frac{r}{q}} 
	\quad\Leftrightarrow\quad j \leq \frac{k}{n}x + \frac{r}{q}
	\quad\Leftrightarrow\quad \frac{n}{x}\left(j - \frac{r}{q}\right) \leq k .
\]
Thus we can express the sum \eqref{eq:4} as
\begin{align}
	\sum_{\substack{k = 1 \\ kp \equiv r \, (q)}}^n \frac1k \floor{\frac{k}{n} x + \frac{r}{q}}
	&= \sum_{j = 1}^{j_\mathrm{max}} \sum_{\substack{k = 1 \\ kp \equiv r \, (q)}}^n \frac1k \,\mathds{1}\left( k \geq \frac{n}{x}\left(j - \frac{r}{q}\right) \right) \\
	&= \sum_{j = 1}^{\floor{x + \frac{r}{q}}}  \sum_{\substack{k = \ceil{\frac{n}{x}(j - \frac{r}{q})} \\ kp \equiv r \, (q)}}^n \frac{1}{k} .
\label{eq:7}
\end{align}
Since the function $1/t$ is sufficiently smooth, the inner summation of \eqref{eq:7} restricted to a single $q$-residue class converges to a $1/q$-factor of the unrestricted sum,
\[
	\lim_{n \to \infty} \sum_{\substack{k = \ceil{\frac{n}{x}(j - \frac{r}{q})} \\ kp \equiv r \, (q)}}^n \frac{1}{k} 
	= \frac1{q} \left( \lim_{n \to \infty} \sum_{k = \ceil{\frac{n}{x}(j - \frac{r}{q})}}^n \frac{1}{k} \right)
	= \frac1{q} \log\left( \frac{q x}{q j - r} \right).
\]
Summing these limits over the index $j$ as in \eqref{eq:7},
\begin{align}
	\lim_{n \to \infty} \sum_{\substack{k = 1 \\ kp \equiv r \, (q)}}^n \frac1k \floor{\frac{k}{n} x + \frac{r}{q}}
	&\;=\; \sum_{j = 1}^{\floor{x + \frac{r}{q}}} \lim_{n \to \infty} \sum_{\substack{k = \frac{n}{x}(j - \frac{r}{q}) \\ kp \equiv r \, (q)}}^n \frac{1}{k} 
	\;=\; \frac{1}{q}\sum_{j = 1}^{\floor{x + \frac{r}{q}}} \log\left( \frac{q x}{q j - r} \right).
\end{align}
Finally, we sum over the $q$-residue class representatives $r$ as in \eqref{eq:with-r},
\begin{align}
	\lim_{n \to \infty} \left( B_n\left(x + \frac{p}{q}n\right) - B_n\left(\frac{p}{q}n\right)\right) 
	&=\; \sum_{r = 0}^{q - 1} \frac{1}{q}\sum_{j = 1}^{\floor{x + \frac{r}{q}}} \log\left( \frac{q x}{q j - r} \right) \\
	&=\; \frac{1}{q} \sum_{r = 0}^{q - 1} \sum_{\substack{\ell = 1 \\ \ell \equiv -r (q)}}^{\floor{q x}} \log\left( \frac{q x}{\ell} \right) \\
	% &=\; \frac1{q} \floor{qx} \log(qx) - \frac{1}{q} \sum_{\ell = 1}^{\floor{qx}} \log \ell \label{eq:q-duplication} \\
	&=\; \frac{1}{q} \sum_{\ell = 1}^{\floor{qx}} \log\left( \frac{qx}{\ell} \right)
\end{align}
as claimed.
%In \eqref{eq:q-duplication} we make use of the ``$q$-duplication identity''
%\[
%	\sum_{r = 0}^{q - 1} \floor{x + \frac{r}{q}} = \floor{q x}.
%	\qedhere
%\]
\end{proof}

\subsection{Incremented staircase at $0$}

Concerning the limiting behavior of 
$D_n(x) = A_n(x) - A_{n-1}(x)$, 
Theorem~\ref{thm:limit-diff}
states that
\[
	\lim_{n \to \infty} n^2 D_n\left( \frac1{n} x \right) = \sum_{k = 1}^{\floor{x}} \left( 1 - \log\left( \frac{x}{k} \right)\right).
\]
To address this limit, we combine Theorem~\ref{thm:limit-main} with Proposition~\ref{prop:diff-farey}.

\begin{proof}[Proof of Theorem~\ref{thm:limit-diff}]
From Proposition~\ref{prop:diff-farey}, we have
\[
	(n^2 - n) D_n\left(\frac1{n} x\right) = \floor{x} - n A_{n}\left(\frac1{n} x\right) 
	= \floor{x} - B_n(x).
\]
In the limit $n \to \infty$, the right-hand side is equal to 
\[
	\floor{x} - B(x) = \sum_{k = 1}^{\floor{x}} \left( 1 - \log\left( \frac{x}{k} \right)\right).
\]
To complete the proof, it suffices to verify the uniform convergence $ n D_n(x) \to 0$ as $n \to \infty$.
This follows from Lemma~\ref{lem:staircase-bounds} (b), which states
\[
	0 \leq n D_n(x) \leq \frac{\log n}{n - 1}
	\qquad\text{for all } n \geq 2. \qedhere
\]
\end{proof}

\section{The dilation derivative}
\label{sec:dilation}

In this section, we introduce some additional conceptual framework that suggests a heuristic argument for Theorem~\ref{thm:limit-main}.
% and uses less direct computation.
We do not turn this heuristic into a rigorous alternative proof.

\begin{dfn}
Given a function $f: \RR \to \RR$,
let
\begin{equation}
\label{eq:dilat-deriv}
	\dilat f(x) = \lim_{\lambda \to 1} \frac{\lambda^{-1} f(\lambda x) - f(x) }{\lambda-1 } ,
\end{equation}
when the limit exists.
We call $\dilat f$ the {\em dilation derivative} of $f$.
\end{dfn}
Compared to the limit definition of the usual derivative, the horizontal translation $f(x + \epsilon)$ is replaced with the dilation 
% $(1 + \epsilon)^{-1} f( (1 + \epsilon)x)$.
$\lambda^{-1} f(\lambda x)$,
with $\lambda = 1 + \epsilon$.

\begin{eg}
The dilation derivative satisfies the following identities.
\begin{enumerate}
\item 
For any function $f$, we have $\dilat f(0) = -f(0)$;

\item 
$\dilat x^{\alpha} = (\alpha - 1) x^{\alpha}$;

\item 
$\dilat e^{kx} = (k x - 1) e^{kx}$;

\item 
$\dilat \log x = 1 - \log x$.
\end{enumerate}
\end{eg}
These examples generalize to the following result.

\begin{prop}
For a differentiable function $f(x)$,
\[
	\dilat f(x) = x \frac{d}{dx} f(x) - f(x) .
\]
\end{prop}
\begin{proof}
If $x = 0$, the claim follows from the definition of $\dilat f(x)$. 
Now suppose $x \neq 0$.
We have
\begin{align}
	\dilat f(x) = \lim_{\lambda \to 1} \frac{1}{\lambda} \cdot \frac{f(\lambda x) - \lambda f(x)}{\lambda - 1}
	&= \lim_{\lambda \to 1} \frac{f(\lambda x) - f(x) + f(x) - \lambda f(x)}{\lambda - 1}  \\
	&= \left( \lim_{\lambda \to 1} \frac{f(\lambda x) - f(x)}{\lambda - 1} \right) - f(x) \\
	&= x \left( \lim_{\lambda \to 1} \frac{f(x + (\lambda - 1)x) - f(x)}{(\lambda - 1)x}\right) - f(x) .
\end{align}
The limit in the last expression is $\frac{d}{dx} f(x)$, since $\epsilon \coloneqq (\lambda - 1) x \to 0$ as $\lambda \to 1$.
\end{proof}

\begin{claim}[Dilation Heuristic]
\label{claim:heuristic}
Suppose that $B(x)$ is the pointwise limit of $B_n(x)$.
Then
\[
	\dilat B(x) = \floor{x} - B(x).
\]
\end{claim}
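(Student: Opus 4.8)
The plan is to reduce the computation of $\dilat B(x)$ to the differentiation rule $\dilat f(x) = x \frac{d}{dx} f(x) - f(x)$ from the preceding Proposition. The essential input is the integral representation $B(x) = \int_0^x \frac{\floor{t}}{t}\,dt$ recorded in Remark~\ref{rmk:main}(i): by the fundamental theorem of calculus, $B$ is differentiable at every non-integer $x > 0$, with $B'(x) = \floor{x}/x$. Feeding this into the Proposition gives, for non-integer $x$,
\[
	\dilat B(x) = x \cdot \frac{\floor{x}}{x} - B(x) = \floor{x} - B(x),
\]
which is precisely the claimed identity.

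First I would record the derivative $B'(x) = \floor{x}/x$, valid on each open interval $(m, m+1)$ on which $\floor{\cdot}$ is constant; continuity of $B$ (also from Remark~\ref{rmk:main}) guarantees that the integral is a genuine antiderivative, so the Proposition applies verbatim on these intervals. As an internal check, the same value arises termwise: writing $B(x) = \sum_{k=1}^{\infty} \log^+(x/k)$, only the $\floor{x}$ terms with $k \le \floor{x}$ are nonzero near a non-integer $x$, and their number is locally constant, so $\dilat$ may be applied term by term. The Example identities $\dilat \log x = 1 - \log x$ and $\dilat c = -c$ for a constant $c$ then give $\dilat \log^+(x/k) = 1 - \log(x/k)$ for $x > k$ and $0$ for $x < k$, and summing over $k \le \floor{x}$ recovers $\floor{x} - B(x)$.

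The main obstacle is the set of integers $x = m$, where $B$ has a corner and the Proposition does not apply: the right derivative is $B'(m^+) = 1$ while the left derivative is $B'(m^-) = (m-1)/m$. Expanding the difference quotient $\frac{\lambda^{-1} B(\lambda m) - B(m)}{\lambda - 1}$ to first order in $\lambda - 1$ shows that the one-sided dilation derivatives disagree: the limit $\lambda \to 1^+$ equals $m - B(m) = \floor{m} - B(m)$, whereas the limit $\lambda \to 1^-$ equals $m - 1 - B(m)$. Thus the two-sided dilation derivative fails to exist at integer points, so strictly speaking the identity holds on the full-measure set of non-integer $x$. Since this section is explicitly heuristic, I would either restrict the statement to non-integer $x$ or adopt the right-hand interpretation $\lambda \to 1^+$, which uses $B'(m^+) = \floor{m}/m$ and again recovers $\floor{m} - B(m)$; either way the identity holds wherever the dilation derivative is understood to exist.
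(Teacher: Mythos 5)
Your argument is correct as far as it goes, but it takes a fundamentally different route from the paper, and the difference matters for what the claim is meant to accomplish. You prove the identity by importing the closed form $B(x) = \int_0^x \frac{\floor{t}}{t}\,dt$ from Remark~\ref{rmk:main}, differentiating, and applying the proposition $\dilat f = x f' - f$. That is a rigorous verification (away from integer points) that the limit function of Theorem~\ref{thm:limit-main} satisfies $\dilat B = \floor{x} - B(x)$. But the point of Section~\ref{sec:dilation} is the reverse direction: the claim is supposed to follow from the bare hypothesis that the pointwise limit $B$ exists, using only the finite-$n$ recursion of Proposition~\ref{prop:diff-farey}, so that the corollary $x B'(x) = \floor{x}$, $B(0)=0$ can then be integrated to \emph{recover} Theorem~\ref{thm:limit-main}; your proof uses Theorem~\ref{thm:limit-main} as input, which makes that program circular. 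The paper's own argument instead rewrites Proposition~\ref{prop:diff-farey}(b) at the rescaled point $\frac{1}{n-1}x$ as a difference quotient in the dilation parameter $\lambda_n = \frac{n}{n-1}$, whose right-hand side visibly tends to $\floor{x} - B(x)$, and then openly concedes that identifying the left-hand side with $\dilat B(x)$ requires a rate-of-convergence estimate for $B_n \to B$ that it does not supply --- which is exactly why the statement is labeled a heuristic rather than a theorem. So in effect you have given a rigorous proof of a weaker statement (the explicit limit function satisfies the dilation identity), while the paper gives an intentionally incomplete argument for the intended, logically prior one. Your observation that the two-sided dilation derivative fails to exist at positive integers $m$, with one-sided values $m - B(m)$ and $m - 1 - B(m)$, is correct and is a genuine precision the paper omits; restricting to non-integer $x$ or adopting the $\lambda \to 1^+$ convention is the right fix.
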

%\begin{proof}
Proposition~\ref{prop:diff-farey} implies that
\[
	(n - 1) A_n (x) - (n - 1) A_{n-1}( x) = \frac{1}{n}\left( \floor{n x} - n A_n(x) \right).
\]
Replace $x$ with $\frac1{n - 1} x$, to yield
\[
	\frac{n-1}{n} B_n\left(\frac{n}{n-1} x\right) - B_{n-1}\left(x \right) = \frac1{n} \left( \floor{\frac{n}{n-1} x} - B_n\left(\frac{n}{n-1} x\right) \right).
\]
Now let $\lambda_n = \frac{n}{n-1}$,
so the equation above multiplied by $n-1$ becomes
\begin{equation}
\label{eq:heuristic-limit}
	\frac{\lambda_n^{-1} B_n(\lambda_n x) - B_{n-1}(x)}{\lambda_n-1}
	= \lambda_n^{-1} \left( \floor{\lambda_n x} - B_n\left(\lambda_n x\right) \right).
\end{equation}
Then take the limit as $n\to \infty$, so that $\lambda_n \to 1$.
The right-hand side approaches $\floor{x} - B(x)$.
The left-hand side is more subtle:
by hypothesis $B_n \to B$, so we may compare the left-hand side with
\begin{equation}
	\lim_{\lambda_n \to 1} \frac{\lambda_n^{-1} B(\lambda_n x) - B(x)}{\lambda_n - 1} = \dilat B(x).
\end{equation}
We would like to say that as $n \to \infty$, the left-hand side of \eqref{eq:heuristic-limit} approaches $\dilat B(x)$.
However, the key step that remains in order to 
reach this conclusion rigorously is to bound the difference
\[
	\left| \frac{\lambda_n^{-1} B_n(\lambda_n x) - B_{n-1}(x)}{\lambda_n - 1} - 	\frac{\lambda_n^{-1} B(\lambda_n x) - B(x)}{\lambda_n - 1} \right|.
\]
To do so, it is necessary to further analyze the rate of convergence $B_n(x) \to B(x)$,
which we leave for future investigation.

\begin{corollary}[to Claim~\ref{claim:heuristic}]
The limit function $B(x)$ is a solution to the differential equation
\[
	x B'(x) = \floor{x}, \qquad B(0) = 0 .
\]
\end{corollary}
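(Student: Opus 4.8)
The plan is to read the Corollary off directly from Claim~\ref{claim:heuristic} combined with the Proposition computing the dilation derivative of a differentiable function. First I would invoke the Claim, which asserts
\[
	\dilat B(x) = \floor{x} - B(x).
\]
Next, at any point $x$ where $B$ is differentiable, the Proposition gives $\dilat B(x) = x B'(x) - B(x)$. Equating these two expressions for $\dilat B(x)$ and cancelling the common term $-B(x)$ yields $x B'(x) = \floor{x}$, which is the asserted differential equation. The initial condition $B(0) = 0$ is immediate: by Remark~\ref{rmk:main}(i) we have $B(x) = 0$ on $0 \leq x \leq 1$, and in any case the sum defining $B(0)$ is empty.

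The one step requiring care is the differentiability hypothesis of the Proposition. The function $B$ is \emph{not} differentiable at the positive integers: from the piecewise formula in Remark~\ref{rmk:main}(iii), at $x = k$ the right-hand derivative is $k/k = 1$ while the left-hand derivative is $(k-1)/k$, and these disagree for every $k \geq 1$. I would therefore state that $x B'(x) = \floor{x}$ holds at every non-integer $x > 0$, i.e. at every point where $B'$ exists, with the continuity of $B$ and the value $B(0)=0$ pinning down the solution; this is the sense in which $B$ ``solves'' the equation. Since we are permitted to assume the Claim, the derivation itself is short, so the genuine content of the proof is only this bookkeeping about the integer points rather than any substantive analytic difficulty.

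Finally, I would remark that, although the Corollary is framed as a consequence of the (admittedly non-rigorous) Claim, it also admits an independent and fully rigorous verification via the integral representation $B(x) = \int_0^x \frac{\floor{t}}{t}\,dt$ from Remark~\ref{rmk:main}(i). The integrand $\floor{t}/t$ is continuous at each non-integer $t > 0$, so the fundamental theorem of calculus shows $B$ is differentiable there with $B'(x) = \floor{x}/x$, giving $x B'(x) = \floor{x}$ at once. Including this observation makes the Corollary self-contained while preserving the conceptual role of the Claim, which is to explain \emph{why} this particular differential equation should be the one governing the limit function.
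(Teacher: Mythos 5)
Your proposal is correct and matches the argument the paper intends (the paper states this as an immediate corollary without writing out a proof): combine $\dilat B(x) = \floor{x} - B(x)$ from the Claim with $\dilat f(x) = x f'(x) - f(x)$ from the Proposition and cancel $-B(x)$. Your extra bookkeeping about non-differentiability at the integers and the independent verification via $B(x) = \int_0^x \floor{t}/t\,dt$ are accurate and consistent with how the paper itself uses the corollary in the following sentence.
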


If we know additionally that the limit function $B(x)$ is continuous, this implies that $\displaystyle B(x) = \int_0^x \frac{\floor{t}}{t} dt$, recovering Theorem~\ref{thm:limit-main} via Remark~\ref{rmk:main} (ii).

\appendix 
\section{Proof of Olympiad problem}
\label{sec:olympiad}

We conclude with a proof of the Olympiad problem \eqref{eq:olympiad} for the sake of completeness, which we restate below.
\begin{claim}[1981 USAMO, Problem 5]
For any positive integer $n$ and any $x$, 
\begin{equation*}
	\floor{nx} \geq \sum_{k=1}^n \frac{1}{k}\floor{kx} .
\end{equation*}
\end{claim}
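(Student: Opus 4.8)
The plan is to argue by strong induction on $n$. Write $a_k \coloneqq \floor{kx}$ and $S_n \coloneqq \sum_{k=1}^n \tfrac1k a_k$, so that the assertion to prove is exactly $a_n \geq S_n$. The case $n = 1$ is the equality $a_1 = S_1$. For the inductive step I would assume $a_m \geq S_m$ for all $m < n$ and aim to deduce $a_n \geq S_n$. Since $S_n = S_{n-1} + \tfrac1n a_n$, the target inequality $a_n \geq S_n$ is equivalent to $(n-1)a_n \geq n S_{n-1}$, and I would prove this reformulated inequality by showing that both sides are comparable to the auxiliary quantity $2\sum_{k=1}^{n-1} a_k$.

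The lower estimate $(n-1)a_n \geq 2\sum_{k=1}^{n-1} a_k$ relies only on the superadditivity of the floor function, namely $\floor{\alpha} + \floor{\beta} \leq \floor{\alpha + \beta}$. Applying this with $\alpha = jx$ and $\beta = (n-j)x$ gives $a_j + a_{n-j} \leq a_n$ for each $j \in \{1, \dots, n-1\}$; summing over $j$ and noting that $\sum_{j=1}^{n-1} a_{n-j} = \sum_{k=1}^{n-1} a_k$ yields $(n-1)a_n \geq 2\sum_{k=1}^{n-1} a_k$.

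The upper estimate $2\sum_{k=1}^{n-1} a_k \geq n S_{n-1}$ is where the induction hypothesis enters. Summing the hypotheses $a_m \geq S_m$ over $m = 1, \dots, n-1$ and interchanging the order of summation, I would compute
\[
	\sum_{m=1}^{n-1} a_m \;\geq\; \sum_{m=1}^{n-1} S_m \;=\; \sum_{k=1}^{n-1} \frac{n-k}{k}\, a_k \;=\; n S_{n-1} - \sum_{k=1}^{n-1} a_k ,
\]
which rearranges to $2\sum_{k=1}^{n-1} a_k \geq n S_{n-1}$. Chaining the two estimates gives $(n-1)a_n \geq 2\sum_{k=1}^{n-1} a_k \geq n S_{n-1}$, which is precisely the reformulated target. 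Dividing by $n$ gives $\tfrac{n-1}{n} a_n \geq S_{n-1}$, and adding $\tfrac1n a_n$ to both sides recovers $a_n \geq S_{n-1} + \tfrac1n a_n = S_n$, closing the induction.

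I expect the main obstacle to be conceptual rather than computational: recognizing that the elementary superadditivity inequality and the summed induction hypothesis can be made to meet through the common intermediary $2\sum_{k=1}^{n-1} a_k$. Each of the two estimates is routine in isolation, but neither $(n-1)a_n \geq 2\sum_{k=1}^{n-1} a_k$ nor $2\sum_{k=1}^{n-1} a_k \geq n S_{n-1}$ is the inequality one is trying to prove. The decisive step is to introduce this auxiliary sum and observe that it lies between $n S_{n-1}$ and $(n-1)a_n$, since the reduction of the goal to $(n-1)a_n \geq n S_{n-1}$ is what makes the pairing of these two facts fall into place.
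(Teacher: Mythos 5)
Your proof is correct, but it follows a genuinely different route from the paper's. Both arguments rest on the same two ingredients --- superadditivity of the floor function, $\floor{\alpha}+\floor{\beta}\leq\floor{\alpha+\beta}$, and induction --- but they deploy them differently. The paper picks a single distinguished index $d$ maximizing $\tfrac1k\floor{kx}$ over $k\leq n$, applies superadditivity once to split $\floor{nx}\geq\floor{(n-d)x}+\floor{dx}$, bounds the tail $\sum_{k=n-d+1}^{n}\tfrac1k\floor{kx}$ by $d$ copies of the maximal term, and invokes the induction hypothesis only at $n-d$. You instead average: you sum superadditivity over \emph{all} splits $j+(n-j)=n$ to get $(n-1)a_n\geq 2\sum_{k=1}^{n-1}a_k$, and you sum the induction hypotheses over \emph{all} $m<n$ (so you genuinely need strong induction) to get $2\sum_{k=1}^{n-1}a_k\geq nS_{n-1}$, with the auxiliary quantity $2\sum_{k=1}^{n-1}a_k$ serving as the bridge. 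Your computation checks out at every step: the reformulation $a_n\geq S_n\iff(n-1)a_n\geq nS_{n-1}$, the pairing $\sum_{j=1}^{n-1}a_{n-j}=\sum_{k=1}^{n-1}a_k$, and the interchange $\sum_{m=1}^{n-1}S_m=\sum_{k=1}^{n-1}\tfrac{n-k}{k}a_k=nS_{n-1}-\sum_{k=1}^{n-1}a_k$ are all right. What each approach buys: yours is entirely mechanical once the auxiliary sum is introduced --- no clever choice of index is needed, only symmetric summation --- while the paper's is shorter on the page but hinges on the nonobvious selection of the maximizing $d$ and uses the induction hypothesis more economically (at a single smaller index rather than at all of them).
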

\begin{proof}
We proceed by induction on $n$.
The relation clearly holds when $n = 1$ % (and when $n = 0$)
for all $x$, so suppose $n\geq 2$.

Given $n$ and $x$,
let $d = d(n,x)$ denote any element of $\{1, 2, \dots, n\}$ such that 
\[
	\tfrac1d \floor{dx} = \max \left\{ \tfrac1k \floor{kx} : k = 1,2,\ldots, n\right\} .
\]
Note that $\floor{x + y} \geq \floor{x} + \floor{y}$ for any real numbers $x$ and $y$, so we have
\begin{align}
\label{eq:olymp-split}
	\floor{nx} % = \floor{(n-d)x + dx} 
	\geq \floor{(n-d)x} + \floor{dx},
\end{align}
and we can bound each of the right-hand terms separately.

The assumption that 
$\frac1d \floor{dx} \geq \frac{1}{k} \floor{kx}$
for $k = 1,2,\ldots , n$
implies that
\begin{equation}
\label{eq:olymp3}
	\floor{dx} = \sum_{k=n-d+1}^n \frac1d \floor{dx} \geq \sum_{k=n-d+1}^n \frac1k \floor{kx},
\end{equation}
and by the induction hypothesis, %since $n-d < n$ 
\begin{equation}
\label{eq:olymp4}
	\floor{(n-d)x} \geq \sum_{k=1}^{n-d} \frac1k \floor{kx}.
\end{equation}
Combining \eqref{eq:olymp-split} with the bounds \eqref{eq:olymp3} and \eqref{eq:olymp4} implies that
\[
	\floor{nx} \geq 
	\left(\sum_{k=1}^{n-d} \tfrac1k \floor{kx} \right) + \left(\sum_{k=n-d+1}^n \tfrac1k \floor{kx} \right)
\]
as desired.
\end{proof}

\section*{Acknowledgements}
I am grateful to Michael Filaseta for bringing this problem to my attention,
and to Jeffrey Lagarias and Samantha Pinella for useful conversations.
I thank the anonymous referee for suggestions that improved this work, in particular in the proof of Theorem~\ref{thm:limit-diff}. 
The figures in this paper were made using the Matplotlib~\cite{matplotlib} Python library.
This work was supported by NSF grants DMS-1600223 and DMS-1701576
and a Rackham Predoctoral Fellowship.

\bibliography{floor_average_ref} 
\bibliographystyle{abbrv}

\end{document}